\documentclass{amsart}
\usepackage{amsmath}
\usepackage{amsfonts}
\usepackage{mathrsfs}
\usepackage{amssymb}
\usepackage{amsthm}
\usepackage{verbatim}
\usepackage{epic}
\usepackage{eepic}
\newtheorem{theorem}{Theorem}[section]
\newtheorem{lemma}[theorem]{Lemma}
\newtheorem{corollary}[theorem]{Corollary}
\newtheorem{proposition}[theorem]{Proposition}
\newtheorem*{maintheorem}{Main Theorem}
\theoremstyle{definition}
\newtheorem{definition}[theorem]{Definition}

\newtheorem{property}[theorem]{Property}
\theoremstyle{remark}

\numberwithin{equation}{section}
\newcommand{\BBR}{\mathbb R}
\newcommand{\BBN}{\mathbb N}
\newcommand{\BBC}{\mathbb C}

\newcommand{\T}{\operatorname T}
\newcommand{\M}{\operatorname M}
\newcommand{\SO}{\operatorname{SO}}
\newcommand{\GL}{\operatorname{GL}}
\newcommand{\vectxi}{\overrightarrow{\xi}}
\newcommand{\vectv}{{\overrightarrow{v}}}
\newcommand{\vectw}{{\overrightarrow{w}}}
\newcommand{\vectp}{{\overrightarrow{p}}}
\newcommand{\Drn}{{\widetilde D_{r,n}}}
\newcommand{\Dr}{{\widetilde D_r}}

\begin{document}
\author[S.\ Z.\ Gautam]{S.\ Zubin Gautam}
\address{Department of Mathematics, UCLA, Los Angeles, CA 90095-1555, USA.}
\email{sgautam@ucla.edu}%
\keywords{Bilinear Fourier multipliers, multilinear operators}%
\subjclass[2000]{42B15, 42B20.}
\title{On curvature and the bilinear multiplier problem}
\begin{abstract}
We provide sufficient normal curvature conditions on the boundary of
a domain $D \subset \BBR^4$ to guarantee unboundedness of the
bilinear Fourier multiplier operator $\T_D$ with symbol $\chi_D$
outside the local $L^2$ setting, \textit{i.e}.\ from $L^{p_1}
(\BBR^2) \times L^{p_2} (\BBR^2) \rightarrow L^{p_3'} (\BBR^2)$ with
$\sum \frac{1}{p_j} = 1$ and $p_j <2$ for some $j$. In particular,
these curvature conditions are satisfied by any domain $D$ that is
locally strictly convex at a single boundary point.
\end{abstract}
\maketitle
\section{Introduction}
The celebrated ball multiplier theorem of C. Fefferman
(\cite{fefferman}) states that the characteristic function of the
unit ball $B_d$ in $\BBR^d$, $d\geq 2$, is not a bounded Fourier
multiplier on $L^p(\BBR^d)$ for $p\neq 2$.  As an immediate
corollary of the proof, one obtains the corresponding result with
the ball replaced by any connected domain $D$ in $\BBR^d$ whose
boundary is a sufficiently smooth hypersurface with nonzero second
fundamental form (or equivalently a nonzero principal curvature) at
some point.

Interest has arisen in studying analogues of the ball multiplier
question in the bilinear setting; namely, given a domain $D \subset
\BBR^{2d}$, one may ask whether the bilinear Fourier multiplier
$\T_D: \mathcal S(\BBR^d) \times \mathcal S (\BBR^d) \rightarrow
\mathcal S'(\BBR^d)$ given by
\[\T_D\big(f,g\big)(x) = \int_{\BBR^d} \int_{\BBR^d} \chi_D(\xi, \eta) \hat
f(\xi) \hat g(\eta) e^{2\pi i (\xi + \eta) \cdot x} \, \mathrm
d\xi\, \mathrm d\eta\] extends to a bounded bilinear operator from
$L^p (\BBR^d)\times L^q(\BBR^d)$ to $L^r(\BBR^d)$ for suitable
ranges of $p,q,r$; here $\chi_D$ denotes the characteristic function
of $D$. For dimension $d=1$, the case of $D = B_{2}$ the unit disc
of $\BBR^{2}$ was treated by Grafakos and Li, who showed in
\cite{grafakosli} that in fact $\T_{B_{2}}$ \emph{is} a bounded
operator from $L^p(\BBR) \times L^q(\BBR)$ to $L^r(\BBR)$ in the
local $L^2$ case (\textit{i.e}.\ $\frac{1}{p} + \frac{1}{q} +
\frac{1}{r'} = 1$ with $p,q,r'\geq2$). However, the status of the
bilinear disc multiplier \emph{outside} the local $L^2$ case remains
unknown as of this writing, and for the majority of this paper we
concern ourselves only with dimensions $d \geq 2$.

In the linear setting, Fefferman's theorem and the boundedness of
the Hilbert transform give the following dichotomy:  ``Polyhedral''
domains (with finitely many faces) yield bounded Fourier
multipliers, while domains whose boundaries possess curvature (or
simply a suitably rich collection of tangent hyperplanes) give rise
to unbounded multipliers, as noted above. By contrast, the situation
is less well-understood in the bilinear setting, largely because the
boundedness properties of even the half-space multiplier operators
$\T_{P_{\vectv}}$ are not well-understood for $d\geq 2$; here
$P_{\vectv} = \{\vectxi \in \BBR^{2d} \; | \; \vectxi \cdot \vectv >
0\}$ (Theorem \ref{halfplanetheorem} below provides an unboundedness
result for $\T_{P_\vectv}$ in a rather limited range of exponents).
These operators are essentially higher-dimensional analogues of the
bilinear Hilbert transform and are of independent interest (see
Section \ref{notations}; see also \cite{demeterthiele} for a
discussion of related operators and connections to ergodic theory).

Nonetheless, in high dimensions the ideas of Fefferman's original
argument have been successfully adapted to yield unboundedness
results for bilinear Fourier multipliers associated to domains with
boundary curvature.  For $d\geq 2$ and $B_{2d}$ the unit ball of
$\BBR^{2d}$, Diestel and Grafakos (\cite{diestelgrafakos}) proved
that $\T_{B_{2d}}$ is not a bounded bilinear Fourier multiplier
\emph{outside} the local $L^2$ setting; in \cite{grafakosreguera},
Grafakos and Reguera generalized this result to replace the ball
$B_{2d}$ with a compact, strictly convex domain $D$ whose boundary
is a smooth hypersurface in $\BBR^{2d}$.

For both the statements and the proofs of our results, we will adopt
a symmetric presentation in terms of trilinear forms rather than
bilinear operators; this approach rids us of the inconvenience of
dealing with duality, and more importantly it has the decided
benefit of placing our curvature conditions below in a natural
geometric setting.  For now we restrict our attention to dimension
$d=2$; see Remark (2) of Section \ref{remarks} for a discussion of
higher dimensions. To any bilinear operator $\T: \mathcal S(\BBR^2)
\times \mathcal S(\BBR^2) \rightarrow \mathcal S'(\BBR^2)$ we can
associate a trilinear form $\Lambda$ on $\mathcal S(\BBR^2) \times
\mathcal S(\BBR^2) \times \mathcal S(\BBR^2)$ defined by \[ \Lambda
(f_1,f_2,f_3) = \int_{\BBR^2} \T(f_1,f_2)(x) f_3(x) \, \mathrm dx.\]
For triples $\overrightarrow p = (p_1,p_2,p_3)$ with $1 \leq p_j
\leq\infty$ for all $j$, the boundedness of $\T$ from $L^{p_1}
\times L^{p_2}$ to $L^{p_3'}$ is equivalent to the boundedness of
$\Lambda$ on $L^{p_1} \times L^{p_2} \times L^{p_3}$:
\[ |\Lambda(f_1,f_2,f_3)| \leq \|\T\| \,
\prod_{j=1}^3\|f_j\|_{p_j}\,;\] in this case, we say that the form
$\Lambda$ is of ``type $\overrightarrow p$.''  The natural range of
exponent triples $\vectp$ under consideration is given by demanding
that the trivial form $\Lambda_0 (f_1,f_2,f_3) := \int f_1 f_2 f_3$
be bounded; \textit{viz.}, we consider only ``homogeneous'' $\vectp$
with $\sum\frac{1}{p_j} = 1$.

For the bilinear Fourier multiplier operators $\T_D$ as above, the
associated trilinear forms are given by embedding $D$ into $\BBR^6$
as follows: Let $\Gamma$ be the subspace
\[\Gamma:=\{(\xi_1,\xi_2,\xi_3) \in \BBR^2 \times
\BBR^2 \times \BBR^2\; | \;\xi_1 + \xi_2 +\xi_3=0\}\subset \BBR^6,\]
with $\Phi: \BBR^2 \times \BBR^2 \rightarrow \Gamma$ the obvious
isomorphism given by \[\Phi (\xi_1,\xi_2) = \big (\xi_1, \xi_2,
-(\xi_1 + \xi_2)\big).\] Then for $D \subset \BBR^4$, the trilinear
form associated to $\T_D$ is
\[\Lambda_{\Phi(D)}(f_1,f_2,f_3) = \iiint
\delta(\xi_1 + \xi_2 + \xi_3)\chi_{\Phi(D)}(\xi_1,\xi_2,\xi_3) \,
\prod_{j=1}^3 \widehat{f_j}(\xi_j)\, \mathrm d\xi_1 \mathrm d\xi_2
\mathrm d\xi_3.\]  In the
sequel, we will always identify $\BBR^4$ with $\BBR^2 \times \BBR^2$
and $\BBR^6$ with $\BBR^2 \times \BBR^2 \times \BBR^2$, and by a
``$j$-th coordinate slice'' in $\BBR^6$ we mean a $4$-plane of the
form
\[\{(\xi_1,\xi_2,\xi_3) \in \BBR^6 \; | \; \xi_j = \xi_0\}\] for
some fixed $\xi_0 \in \BBR^2$. Note that the intersection of
$\Gamma$ with any $j$-th coordinate slice is a $2$-plane.

Our main result is the following:
\begin{maintheorem}\nonumber \label{mainthm}
Let $\widetilde D$ be a domain in $\Gamma \subset \BBR^6$ such that
$\partial \widetilde D \cap U$ is a smooth, connected
(three-dimensional) hypersurface for some open neighborhood $U
\subset \Gamma$. Suppose that for some $j\in \{1,2,3\}$ the
intersection of $\partial \widetilde D \cap U$ with some $j$-th
coordinate slice is a plane curve of nonzero curvature. Then the
trilinear form $\Lambda_{\widetilde D}$ fails to be of type $\vectp
= (p_1,p_2,p_3)$ whenever $\frac{1}{p_1} + \frac{1}{p_2} +
\frac{1}{p_3} = 1$, $1<p_1,p_2,p_3<\infty$, and $p_i <2$ for some $i
\neq j$.
\end{maintheorem}

Of course, this can be translated to a direct statement about
bilinear multiplier operators associated to domains $D \subset
\BBR^4$ by applying the theorem to $\Phi(D)$.  First and second
coordinate slices in $\Gamma$ correspond to their natural analogues
in $\BBR^4$, while third coordinate slices correspond to $2$-planes
of the form $\{(\xi_1, \xi_2) \in \BBR^4 \; | \; \xi_1 + \xi_2 =
\textrm{constant} \}$.  Since any strictly convex set $D$ is easily
seen to satisfy all three of the given curvature conditions, we
obtain the following generalization of the Grafakos--Reguera result:
\begin{corollary}\label{convexcor}
Let $D$ be a domain in $\BBR^4$ whose boundary $\partial D$ is
smooth in some neighborhood $U \subset \BBR^4$, and suppose that
either $D$ or $\BBR^4 \setminus D$ is strictly convex in this
neighborhood. Then for $\frac{1}{p_1}+ \frac{1}{p_2} + \frac{1}{p_3}
= 1$ with exactly one of $p_1,p_2,p_3$ less than $2$, $\T_D$ does
not extend to a bounded bilinear Fourier multiplier from
$L^{p_1}(\BBR^2) \times L^{p_2}(\BBR^2)$ to $L^{p_3'}(\BBR^2)$.
\end{corollary}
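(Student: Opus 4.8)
\medskip
Corollary~\ref{convexcor} is the Main Theorem applied to $\Phi(D)$, together with two observations: a region strictly convex near a boundary point $p_0$ has every principal curvature of its boundary nonvanishing there, so every coordinate slice of $\partial\Phi(D)$ through a nearby point is a plane curve of nonzero curvature; and ``exactly one of $p_1,p_2,p_3$ is less than $2$'' guarantees that for each $j$ there is some $i\ne j$ with $p_i<2$. So I describe a proof of the Main Theorem. The plan is to adapt C.\ Fefferman's Besicovitch-set argument to the trilinear setting (following Diestel--Grafakos and Grafakos--Reguera), the new ingredient being to \emph{freeze the $j$-th frequency variable} at the outset: collapsing $\widehat{f_j}$ to a point turns $\Lambda_{\widetilde D}$ into a modulation of the \emph{linear} Fourier multiplier on $\BBR^2$ whose symbol is the characteristic function of the curved planar slice region, and the unboundedness off $L^2$ of such a multiplier is exactly (the local form of) Fefferman's theorem recalled above. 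Equivalently, the relevant building blocks are the tangent half-space operators $\T_{P_\vectv}$ of Theorem~\ref{halfplanetheorem} --- higher-dimensional bilinear Hilbert transforms --- whose normals $\vectv$ sweep out an arc as the slice curve bends, and one packs their frequency supports into a Besicovitch configuration lying inside the $j$-th coordinate slice.

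\medskip
First I would normalize. Using the symmetries of the pair $(\Gamma,\{j\text{-th coordinate slices}\})$ --- permutations of the three copies of $\BBR^2$, the diagonal action of $\GL_2(\BBR)$, and translations and dilations within $\Gamma$ --- I reduce to $j=3$, with $\partial\widetilde D\cap U$ a graph near $p_0$ and with the slice $\widetilde D\cap\{\xi_3=\xi_0\}$, read off via the $\xi_1$-coordinate as a region $E\subset\BBR^2$, locally strictly convex near the $\xi_1$-projection $\eta_1$ of $p_0$; put $\eta_2=-\eta_1-\xi_0$. Next, assuming for contradiction that $\Lambda_{\widetilde D}$ is of type $\vectp$, I would test it on functions with $\widehat{f_3}$ an $L^1$-normalized approximate identity at $\xi_0$ of width $\delta$ and $\widehat{f_1},\widehat{f_2}$ supported in fixed small neighborhoods of $\eta_1,\eta_2$. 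On the support of the integrand $\xi_3=-\xi_1-\xi_2$ lies within $O(\delta)$ of $\xi_0$, so $\chi_{\widetilde D}(\xi_1,\xi_2,\xi_3)$ agrees with $\chi_E(\xi_1)$ off a set of measure $o_\delta(1)$, and hence
\[
\Lambda_{\widetilde D}(f_1,f_2,f_3)=\int_{\BBR^2}(T_E f_1)(x)\,f_2(x)\,f_3(x)\,\mathrm dx+o_\delta(1),
\]
$T_E$ being the linear multiplier on $\BBR^2$ with symbol $\chi_E$.

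\medskip
Then I would insert Fefferman's extremizers. Since $\partial E$ is curved at $\eta_1$, the local ball multiplier theorem supplies, for each $p_1<2$ and each large $N$, a function $f_1=f_1^{(N)}$ Fourier-supported near $\eta_1$ --- built from a Besicovitch family of $N$ thin slabs tangent to $\partial E$ --- with $\|T_E f_1\|_{p_1}\gtrsim(\log N)^{\frac1{p_1}-\frac12}\|f_1\|_{p_1}$, where $T_E f_1$ is, up to small errors, a sum of $N$ essentially disjoint physical tubes each modulated by frequency $\approx\eta_1$. Taking $f_3$ a wide bump modulated by $\xi_0$ and $f_2$ the matching sum of those tubes modulated by $\eta_2$, the product $f_2 f_3$ plays the role of the dual function realizing $\|T_E f_1\|_{p_1}$ in the pairing $\int(T_E f_1)f_2 f_3$, while $\widehat{f_2},\widehat{f_3}$ stay localized near $\eta_2,\xi_0$. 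Substituting into the displayed identity and using the homogeneity $\sum\frac1{p_j}=1$ to cancel all powers of $N$ and of the tube dimensions, one is left, for a suitable joint choice of $N$ and $\delta$, with an arbitrarily large quantity, contradicting $|\Lambda_{\widetilde D}(f_1,f_2,f_3)|\le\|\Lambda_{\widetilde D}\|\prod_j\|f_j\|_{p_j}$. Interchanging $f_1$ and $f_2$ handles $p_2<2$, so $\Lambda_{\widetilde D}$ fails to be of type $\vectp$ whenever $p_i<2$ for some $i\ne j$.

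\medskip
The hard part will be the quantitative bookkeeping just indicated. Freezing costs a factor $\|f_3\|_{p_3}\sim\delta^{-2/p_3}$ and forces $\delta$ small enough that the Besicovitch configuration lies inside the region where $\chi_{\widetilde D}\approx\chi_E$, so one must check that Fefferman's Kakeya gain on the $f_1$-side, once correctly apportioned among the three homogeneous exponents, still dominates this loss together with all error terms --- and that it does so precisely in the range $p_i<2$ (for $p_i\ge 2$ the construction yields no gain, consistent with the boundedness anticipated in the local $L^2$ regime). Keeping the three frequency supports disjointly concentrated near $\eta_1,\eta_2,\xi_0$ throughout, so that both the linearization $\chi_{\widetilde D}\approx\chi_E$ and the replacement of $f_3$ by a pure modulation remain uniformly valid, is the point demanding the most care; it may alternatively be packaged as a restriction/transference principle for multilinear multipliers to coordinate slices, whose sensitivity to the exponents is mirrored in the limited range of Theorem~\ref{halfplanetheorem}.
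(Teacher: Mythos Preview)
Your reduction of the Corollary to the Main Theorem is correct and matches the paper's one-line justification: local strict convexity forces every planar slice of $\partial\widetilde D$ to be curved, and the exponent hypothesis lets one pick $j$ at will.

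Your proof sketch of the Main Theorem, however, proceeds differently from the paper's. The paper does not freeze $\widehat{f_{j_0}}$ at a point to reduce to the linear multiplier $T_E$. Instead it first establishes a Meyer-type square function estimate (Lemma \ref{meyer}(b)): one dilates $\widetilde D$ and passes to the limit to obtain the tangent half-space forms $\widetilde\Lambda_n = \widetilde\Lambda_{\vectv^n}$, and the slice condition $\xi^n_{j_0}=\xi_0$ allows the $j_0$-th input to be a \emph{single} function $f_0$ rather than a sequence. The contradiction then comes from setting $f_0=\chi_Q$ for one fixed cube, $f_i^n=\chi_{R_n}$, $f_k^n=\chi_{R_n'}$, and computing $|\widetilde\Lambda_n|\gtrsim|R_n|$ directly for each half-space form.

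Your direct freezing route is workable in principle, but the bookkeeping you flag as ``the hard part'' is not set up correctly in the proposal. Fefferman's extremizers are sums of indicators of $1\times\frac1N$ rectangles, whose essential Fourier supports have extent $\sim N$; to confine $\widehat{f_1},\widehat{f_2}$ to the fixed neighborhood where $\partial\widetilde D$ is smooth you must rescale the physical rectangles up by $\sim N$, which forces the bump $f_3$ to have width $\sim N$ and hence $\|f_3\|_{p_3}\sim N^{2/p_3}$. You frame this as a ``loss'' to be overcome by the logarithmic Kakeya gain, but that comparison would fail: a polynomial loss cannot be beaten by $(\log N)^{\frac1{p_i}-\frac12}$. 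What actually happens is that the $N^{2/p_3}$ factor is \emph{exactly cancelled} by the matching rescalings of $\|f_1\|_{p_1}$ and $\|f_2\|_{p_2}$ via the homogeneity $\sum\frac1{p_j}=1$, leaving only the $\varepsilon^{-(2-p_i)/(2p_i)}$ gain. The paper's square-function approach avoids this balancing altogether: the dilation limit is taken once inside Lemma \ref{meyer}(b), and thereafter one works at scale $1$ with $\|f_0\|_{p_{j_0}}\sim 1$ independent of $N$ and $\varepsilon$.
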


The Main Theorem above can actually be stated more generally;
namely, under essentially the same hypotheses on $\widetilde D$ one
can also prove unboundedness of the operator $\T_D$ outside the
interior of the ``Banach triangle'' (\textit{i.e}.\ from $L^{p_1}
\times L^{p_2}$ to $L^{p_3'}$ with $p_3' \leq 1$, so that
$p_3=\infty$ or $p_3<0$). However, a symmetric statement in terms of
trilinear forms presents some difficulties, as the ``type $\vectp$''
formalism breaks down outside the Banach triangle; to avoid
introducing potentially confusing technicalities at this point, we
defer the statement of the general result to Section \ref{nonbanach}
below.

The idea of the proof of the Main Theorem is quite simple;
heuristically speaking, our approach is simply to apply Fefferman's
original argument on the appropriate coordinate slice.  More
specifically, assuming boundedness of $\Lambda_{\widetilde D}$, we
first obtain a square function estimate for a family of trilinear
forms associated to a family of half-spaces in $\Gamma$.  To
complete the proof, we produce a Besicovitch set-based
counterexample to this estimate.

At this point, given the ease with which one can apply Fefferman's
argument for the ball to more general domains in the linear setting,
the reader may be skeptical as to the necessity of any further
discussion once one has established the unboundedness of the ball
bilinear multiplier.  The key feature distinguishing the bilinear
multiplier problem from the linear one here is a marked decrease in
symmetry with respect to actions on the underlying Euclidean space.
More specifically, the class of linear $L^p$-Fourier multipliers on
$\BBR^d$ is invariant under the natural action of the isometry group
$\operatorname O_d(\BBR) \ltimes \BBR^d$ of $\BBR^d$ (and in fact
under the full affine group $\GL_d(\BBR) \ltimes \BBR^d$); however,
the class of \emph{bilinear} $L^p\times L^q \rightarrow L^r$-Fourier
multipliers on $\BBR^d$ is not invariant under the usual rotation
action of $\SO_{2d}(\BBR)$.\footnote{This fact is well-known but
seems to be folkloric; it can readily be observed by rotating the
symbols of suitable operators falling under the scope of Lemma 1 of
\cite{grafakosli}.  We provide yet another illustrative example in
the discussion following Proposition \ref{muscaluprop} below.} In
fact, it is precisely this absence of $\SO_{2d}$-invariance that
prevents the proofs in \cite{diestelgrafakos} and
\cite{grafakosreguera} from extending easily to more general domains
with curvature.  To wit, though neither proof genuinely requires
surjectivity of the Gauss map $\operatorname N: \partial D
\rightarrow S^{2d-1}$ (which is guaranteed by compactness and strict
convexity), both arguments rely on the presence of a suitable
collection of ``projectively diagonal'' normal vectors of the form
$(v,\lambda v) \in \BBR^d \times \BBR^d$ (see Theorem
\ref{grafakostheorem} below). With such an approach, the ball
appears to be a less generic example in the bilinear setting than in
the linear one; in order to obtain a result treating more general
domains that are merely strictly convex in a neighborhood of some
arbitrary boundary point, one should avoid appealing to the full
wealth of normal directions available on the sphere.

One might expect an ``ideal'' bilinear analogue of Fefferman's
theorem (phrased in terms of operators rather than forms) to state
that $\T_D$ is unbounded for any $D \subset \BBR^4$ whose boundary
has some nonzero principal curvature at a point.  However, the
aforementioned loss of symmetry makes such an analogy impossible:
\begin{proposition}\label{muscaluprop}
There exist domains $\widetilde D \subset \Gamma$ with smooth
boundary such that $\partial \widetilde D$ has nontrivial second
fundamental form at some point while $\Lambda_{\widetilde D}$
\emph{is} of type $(p_1,p_2,p_3)$ whenever $1 < p_1,p_2,p_3 <
\infty$.
\end{proposition}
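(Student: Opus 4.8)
The plan is to exhibit $\widetilde D$ as $\Phi$ of a ``cylinder'' domain in $\BBR^4$ built over a planar region $\Omega$ in a pair of frequency coordinates split one-from-each input, arranged so that the remaining two frequency coordinates are pure ``spectators'' --- whence the trilinear form collapses to a one-dimensional bilinear Hilbert-transform-type form applied fiberwise --- while $\partial\widetilde D$ nonetheless carries a point of nonzero curvature. Concretely, fix a domain $\Omega\subset\BBR^2$ whose boundary is a smooth connected curve with a point $p_0$ of nonzero curvature (for definiteness $\Omega=\{(\alpha,\beta):\beta>\alpha^2\}$, the region above a parabola), and set
\[ D=\{(\xi_1,\xi_2)\in\BBR^2\times\BBR^2\ :\ (\xi_1^{(1)},\xi_2^{(1)})\in\Omega\},\qquad \widetilde D=\Phi(D). \]
Then $\chi_D(\xi_1,\xi_2)=\chi_\Omega(\xi_1^{(1)},\xi_2^{(1)})$ and $\partial\widetilde D=\Phi(\partial\Omega\times\BBR^2)$.

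First I would dispatch the geometry. The cylinder $\partial\Omega\times\BBR^2$ has at each point $(p_0,q)$ second fundamental form equal to that of $\partial\Omega$ at $p_0$ in the curve direction and zero in the transverse plane directions, hence nonzero; and since $\Phi$ is a linear isomorphism it preserves nonvanishing of the second fundamental form --- parametrizing $\partial\Omega\times\BBR^2$ near $(p_0,q)$ as a graph over its tangent hyperplane and post-composing with $\Phi$, the quadratic term is multiplied by the scalar $\langle\Phi n,n'\rangle=|n'|^2\ne 0$, where $n$ is the old unit normal and $n'$ the component of $\Phi n$ orthogonal to the new tangent hyperplane, which is nonzero because $\Phi$ is injective. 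So $\partial\widetilde D$ has a point of nontrivial second fundamental form. I would also record, as a consistency check with the Main Theorem, that since $\partial\Omega$ is a curve, every $j$-th coordinate-slice intersection of $\partial\widetilde D$ is a finite union of affine lines --- zero curvature --- so that no hypothesis of the Main Theorem is met.

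Next comes the reduction. Integrating out the spectator frequencies $\xi_1^{(2)},\xi_2^{(2)}$ and writing $x=(x^{(1)},x^{(2)})$, one computes
\[ \T_D(f,g)(x^{(1)},x^{(2)})=\mathcal B_\Omega\big[f(\cdot,x^{(2)}),\,g(\cdot,x^{(2)})\big](x^{(1)}), \]
where $\mathcal B_\Omega$ is the one-dimensional bilinear multiplier operator with symbol $\chi_\Omega$. Taking $L^{p_3'}$ norms, integrating in $x^{(2)}$ (Fubini), applying the fiberwise bound for $\mathcal B_\Omega$, and then Hölder's inequality in $x^{(2)}$ with the exponents $p_1/p_3'$ and $p_2/p_3'$ --- conjugate because $p_3'/p_1+p_3'/p_2=1$, and both $\geq 1$ because $p_3'\leq\min(p_1,p_2)$ for homogeneous $\vectp$ in the Banach range --- yields
\[ \|\T_D\|_{L^{p_1}(\BBR^2)\times L^{p_2}(\BBR^2)\to L^{p_3'}(\BBR^2)}\ \leq\ \|\mathcal B_\Omega\|_{L^{p_1}(\BBR)\times L^{p_2}(\BBR)\to L^{p_3'}(\BBR)}. \]
Thus the Proposition reduces to boundedness of $\mathcal B_\Omega$ for all homogeneous $(p_1,p_2,p_3)$ with $1<p_j<\infty$.

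This one-dimensional estimate is the heart of the matter and the step I expect to be the main obstacle, precisely because curvature of $\partial\Omega$ is what Fefferman's theorem rules out in the linear analogue. What makes it possible is the absence of any bilinear counterpart to that obstruction in dimension one: the symbol of the bilinear Hilbert transform is the characteristic function of a half-plane, and the Lacey--Thiele theorem makes every half-plane multiplier (that is, $\operatorname{sgn}$ of any linear functional of $(\alpha,\beta)$) bounded throughout the homogeneous Banach range $1<p_j<\infty$. Writing $\chi_{\{\beta>\alpha^2\}}=\tfrac12\big(1+\operatorname{sgn}(\beta-\alpha^2)\big)$ exhibits $\mathcal B_\Omega$, modulo the trivial pointwise product, as a ``dispersive'' bilinear Hilbert transform $\int (e^{itD^2}f)(x)\,g(x-t)\,\tfrac{dt}{t}$ (with $e^{itD^2}$ the unitary Fourier multiplier with symbol $\alpha\mapsto e^{it\alpha^2}$, the normalization of $t$ being immaterial); one would prove its boundedness by a Whitney--Littlewood--Paley decomposition of the $(\alpha,\beta)$-plane adapted to the parabola, on each piece of which the symbol agrees to leading order with $\operatorname{sgn}$ of a linear functional, so that the corresponding operator is a rescaled and modulated bilinear Hilbert transform controlled uniformly by Lacey--Thiele, the remainders summing as paraproduct-type errors. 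Granting this, the displayed inequality shows $\Lambda_{\widetilde D}$ is of type $(p_1,p_2,p_3)$ for all $1<p_j<\infty$, while $\partial\widetilde D$ has a point of nontrivial second fundamental form --- which is the Proposition.
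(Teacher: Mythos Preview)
Your construction is exactly the one the paper uses: lift a planar domain $D_0\subset\BBR^2$ to a cylinder $D\subset\BBR^4$ by pairing one frequency coordinate from each input, and reduce $\T_D$ fiberwise to the one-dimensional bilinear multiplier $\T_{D_0}$. Your treatment of the geometry (nonvanishing of the second fundamental form after $\Phi$, and the consistency check that all coordinate-slice intersections are unions of lines) is correct and more detailed than the paper's, and your H\"older-in-$x^{(2)}$ argument for the fiberwise bound is the same computation the paper leaves to the reader.

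The genuine gap is the last step. The paper does not attempt to prove the one-dimensional bound; it simply invokes Muscalu's theorem (Theorem~2.1.1 of \cite{muscalu}), which furnishes a domain $D_0\subset\BBR^2$ with nontrivial boundary curvature whose bilinear multiplier is bounded throughout the full Banach range. You instead commit to the parabolic region $\Omega=\{\beta>\alpha^2\}$ and sketch an argument: Whitney decomposition adapted to $\partial\Omega$, local approximation by half-plane symbols, uniform Lacey--Thiele bounds on the pieces, paraproduct-type remainders. This sketch is not a proof, and in fact this is precisely the strategy Grafakos--Li carried out for the disc in \cite{grafakosli}---where it produced bounds only in the \emph{local $L^2$} triangle, not the full Banach range $1<p_j<\infty$. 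The obstruction is the phrase ``controlled uniformly by Lacey--Thiele'': the Lacey--Thiele constants are \emph{not} uniform in the direction parameter without substantial further work (uniform estimates for the bilinear Hilbert transform are themselves deep results, and even those do not immediately sum over a Whitney family to give the full range). So your parabola argument, as written, would at best reproduce a local-$L^2$ statement, which is strictly weaker than the Proposition.

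The fix is to do what the paper does: take $D_0$ from Muscalu's theorem rather than the parabola, and cite that result for the one-dimensional bound. If you want to keep the parabola, you would need either to check that it falls under the hypotheses of Muscalu's theorem or to give a complete proof---which would be a paper in itself, not a paragraph.
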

Examples of such $\widetilde D$ are easily given by certain cylinder
sets: A result of Muscalu (\textit{viz}.\ Theorem 2.1.1 of
\cite{muscalu}) gives the existence of domains $D_0 \subset \BBR^2$
with nontrivial boundary curvature for which the bilinear operators
$\T_{D_0}$ are bounded from $L^{p_1}(\BBR) \times L^{p_2}(\BBR)$ to
$L^{p_3'}(\BBR)$ for all triples $(p_1,p_2,p_3)$ as in the
Proposition.  From any such $D_0$, we construct the domain
\[D = \{(\xi_1, \xi_2, \xi_3, \xi_4) \in \BBR^4 \; | \; (\xi_1,
\xi_3) \in D_0\}\] (here we have of course broken with our
convention of always writing $\BBR^4$ as $\BBR^2 \times \BBR^2$).
Then it is easy to check that \[\T_D \big(f_1, f_2\big)(x_1, x_2) =
\T_{D_0}\big(f_1(\, \cdot \, , x_2) \, , \, f_2( \,\cdot \, , x_2)
\big) (x_1),\] so that boundedness of $\T_{D_0}$ gives the desired
boundedness of $\T_D$ and the usual associated trilinear form
$\Lambda_{\Phi (D)}$.  These ``degenerate'' examples of course
illustrate the aforementioned anisotropy of the bilinear setting;
their curvature is restricted to planes of the form $\{\vectxi \in
\BBR^4 \; | \; (\xi_2, \xi_4) = (a,b)\}$ for fixed $(a,b) \in
\BBR^2$, and they can be suitably rotated to fall under the scope of
the Main Theorem.

\subsection*{Acknowledgements}
I would like to thank my advisor Christoph Thiele for many useful
discussions and in particular for suggesting the symmetric treatment
in terms of trilinear forms, which greatly improved the exposition
of this paper.

\section{Notations and preliminaries}\label{notations}
As in the linear case treated by Fefferman, the key feature
obstructing boundedness of $\T_D$ (or of $\Lambda_{\Phi (D)}$) is
the fact that $\partial D$ (or $\partial \Phi (D)$) possesses many
suitable tangent hyperplanes; we pause now to establish some
notation and isolate the geometric properties we will exploit.  As
noted above, we identify $\BBR^6$ with $\BBR^2 \times \BBR^2 \times
\BBR^2$; symbols such as $\vectxi$ and $\vectv$ will denote points
and vectors in $\BBR^6$, while $\xi$ and $v$ will denote points and
vectors in $\BBR^2$.  Families of points or vectors will be indexed
by superscripts, so that, for example, \[\vectxi^n = (\xi^n_1,
\xi^n_2, \xi^n_3).\]  For two quantities $A$ and $B$, we take $A
\lesssim B$ to mean $A \leq cB$ for some constant $c$; when
necessary, dependence of implied constants on certain parameters
will be denoted by subscripts on ``$\lesssim$.''

Consider a domain $\widetilde D \subset \Gamma$ as in Theorem
\ref{mainthm}, and suppose for the moment that $\partial \widetilde
D$ has curvature in a \emph{first} coordinate slice in $\BBR^6$.
Then, for some small choice of $\theta_0 >0$, we can take a
continuum $\{\vectxi^\theta \; | \; -\theta_0 \leq \theta \leq
\theta_0\}$ of points on $\partial \widetilde D$ with the following
properties:
\begin{itemize}
\item $\vectxi^\theta = (\xi_1^0, \xi_2^\theta, \xi_3^\theta)$ for
all $\theta$.
\item Let $\vectv^\theta=(v_1^\theta,v_2^\theta,v_3^\theta)$ denote a normal vector\footnote{$\partial
\widetilde D$ is a three-dimensional submanifold of the
four-dimensional subspace $\Gamma \subset \BBR^6$; we define our
normal vectors in this context, and all normal vectors are of
course chosen consistently with the orientation of $\partial
\widetilde D$.} to $\partial \widetilde D$ at the point
$\vectxi^\theta$. Elementary linear algebra shows that the
projection of $\vectv^\theta$ to the $2$-plane $\{(0,v,-v)\}
\subset \Gamma$ is
\[\frac{1}{\sqrt 2}\;\big(0, w^\theta, -w^\theta \big) := \frac{1}{\sqrt 2}\;\big(0\, ,\, v^\theta_2 - v^\theta_3\, ,\, v^\theta_3 -
v^\theta_2\big),\] and we normalize $\vectv^\theta$ so that
$|w^\theta|=1$ for all $\theta$.  By the curvature condition on
$\partial \widetilde D$, we can arrange that \[w^\theta =
\gamma_\theta w^0,\] where $\gamma_\theta \in
\operatorname{SO}_{2}(\BBR)$ denotes rotation by the angle
$\theta$.
\end{itemize}
This discussion may seem a bit cumbersome; the salient point here is
that, by the standard ``Perron tree'' construction (see
\textit{e.g.\ }\cite{stein}), the collection \[\{w^\theta =
v^\theta_2 - v^\theta_3 \: | \; -\theta_0 \leq \theta \leq
\theta_0\}\] ``yields Besicovitch sets'' in $\BBR^2$ in the
following sense:

\begin{definition}\label{besicovitch}
A family $\mathcal F$ of unit vectors in $\BBR^2$ \emph{yields
Besicovitch sets} if for every $\varepsilon >0$ there is a set
$K_\varepsilon \subset \BBR^2$ such that:
\begin{enumerate}
\item $K_\varepsilon = \bigcup_{n=1}^{N} R_n$ for some $N$ depending on $\varepsilon$, where
each $R_n$ is a rectangle of dimensions $1 \times \frac{1}{N}$.
The length-$1$ sides of each $R_n$ point in the direction of
some $v_n \in \mathcal F$.
\item $|K_\varepsilon|<\varepsilon$.
\item The rectangles $R_n'$, $1\leq n \leq N$,  are disjoint, where
$R_n'$ is obtained by translating $R_n$ by the vector $-2v_n$.
(Since each $v_n$ is a unit vector, $R_n$ and $R_n'$ are
``reaches'' of one another, in the terminology of \cite{stein}.)
\item There is a fixed compact set $K^*$ independent of
$\varepsilon$ such that $K_\varepsilon \subset K^*$.
\end{enumerate}
\end{definition}

In general, if $\widetilde D$ is as in Theorem \ref{mainthm} with
curvature in a $j_0$-th coordinate slice, then $\widetilde D$ enjoys
the following property:
\begin{property} \label{besicovitchproperty}
Given $\varepsilon >0$, there is a Besicovitch set $K_\varepsilon =
\bigcup_{1}^{N}R_n \subset K^*$ as above and a sequence of points
$\vectxi^1, \ldots, \vectxi^N \in \partial \widetilde D$ such that:
\begin{itemize}
\item $\vectxi^n = (\xi^n_1, \xi^n_2, \xi^n_3)$ with $\xi^n_{j_0} =
\xi_0$ for all $n$.
\item For all $n$, there is a normal vector $\vectv^n = (v^n_1,v^n_2,v^n_3)$ to
$\partial \widetilde D$ at $\vectxi^n$ such that the length-1
side of the rectangle $R_n$ is parallel to the vector
\[w^n_{j_0}:= v^n_{\sigma(j_0)} - v^n_{\sigma^2(j_0)} \in
\BBR^2,\] where $\sigma$ is the cycle $(1\;2\;3)$ in the
permutation group $S_3$, and $|w^n_{j_0}| =1$.
\item The vectors $\vectv^n$ all lie in a compact set $A^*$, which
is independent of $\varepsilon$.
\end{itemize}
\end{property}

\smallskip
Finally, for $\vectv \in \Gamma$, consider a half-space $P_{\vectv}
= \{\vectxi \in \Gamma \; | \; \vectxi \cdot \vectv >0\}$ with
associated trilinear form
\[\Lambda_{P_{\vectv}}(f_1,f_2,f_3) = \iiint \delta(\xi_1 + \xi_2 +
\xi_3) \chi_{P_{\vectv}}(\vectxi) \prod_j\widehat{f_j}(\xi_j)\,
\mathrm d\vectxi\] as above.  It is a matter of routine to show that the
trilinear form
\[\widetilde{\Lambda}_{\vectv}(f_1,f_2,f_3) := \int_\BBR \int_{\BBR^2} \prod_j
f_j(x-tv_j) \, \mathrm dx \, \frac{\mathrm dt}{t}\] is a linear
combination of $\Lambda_{P_{\vectv}}$ and the pointwise-product
trilinear form; here the integral in $t$ is taken in the principal
value sense.  The forms $\widetilde \Lambda_\vectv$ are of course
parameterized by the vectors $\vectv \in \Gamma$; it will also prove
useful to view them as parameterized by the triangles (or similarity
classes of triangles) in $\BBR^2$ determined by the vectors $v_j -
v_{\sigma (j)}$ (see Figure \ref{configuration}).
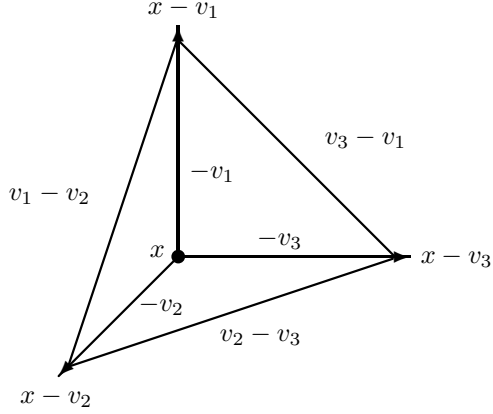
\begin{figure}
\setlength{\unitlength}{15mm}
\begin{picture}(4,4)(-1.7,-1.5)
\thicklines
\put(0,0){\circle*{.1}}
\put(0,0){\vector(0,1){2.05}}
\put(0,0){\vector(-1,-1){1.06}}
\put(0,0){\vector(1,0){2.055}}
\put(-.01,1.92){\line(-1,-3){.97}}
\put(-.98,-.98){\line(3,1){2.885}}
\put(0,1.92){\line(1,-1){1.92}}
\put(-.25,-.0){$x$}
\put(-.35,-.48){$-v_2$}
\put(.1,.7){$-v_1$}
\put(.7,.1){$-v_3$}
\put(-1.4,-1.3){$x-v_2$}
\put(2.15,-.06){$x-v_3$}
\put(-.27,2.15){$x-v_1$}
\put(-1.5,.5){$v_1-v_2$}
\put(.37,-.75){$v_2-v_3$}
\put(1.3,1){$v_3-v_1$}
\end{picture}
\caption{``Configuration triangle''}\label{configuration}
\end{figure}
The bilinear operator associated to the form $\Lambda_{P_{\vectv}}$
is the half-space Fourier multiplier $\T_{P_{\vectw}}$ on $\mathcal
S(\BBR^2) \times \mathcal S (\BBR^2)$ mentioned in the Introduction,
where $\vectw = \Phi^*(\vectv) \in \BBR^4$. Similarly, the bilinear
operator associated to $\widetilde{\Lambda}_{\vectv}$ is given by
\[\operatorname S_{\vectw}(f_1,f_2)(x) := \operatorname{p.v.} \int_\BBR f_1(x-tw_1)
f_2(x-tw_2) \, \frac{\mathrm dt}{t};\] as mentioned above, $\operatorname
S_{\vectw}$ may be viewed as a two-dimensional variant of a bilinear
Hilbert transform.

\section{Square function estimates}
The results of this section are direct analogues of the lemma of Y.
Meyer used in Fefferman's proof (Lemma 1 of \cite{fefferman}), and
their proofs are essentially identical to that of the latter.  We
begin with a domain $\widetilde D \subset \Gamma$ and a sequence of
points $\vectxi^n \in \partial \widetilde D$ at which $\partial
\widetilde D$ has normal vectors $\vectv^n$.  Let $\Lambda_n$ denote
the trilinear form $\Lambda_{P_{\vectv^n}}$ associated to the
half-space $P_n:=P_{\vectv^n}$, and set $\widetilde \Lambda_n :=
\widetilde{\Lambda}_{\vectv^n}$ as in Section \ref{notations} above.
As usual, the main idea is that, by the translation- and
dilation-invariance of bilinear multiplier norms, boundedness of the
trilinear form $\Lambda_{\widetilde D}$ will yield strong uniform
bounds (in fact $\ell^2$ vector-valued bounds) for the forms
$\Lambda_n$ or $\widetilde \Lambda_n$.

\begin{lemma}\label{meyer}
Let $0<p_1,p_2,p_3<\infty$ with $\frac{1}{p_1} + \frac{1}{p_2} +
\frac{1}{p_3} = 1$, and suppose
\[|\Lambda_{\widetilde D}(f_1,f_2,f_2)| \lesssim \prod_{j=1}^3
\|f_j\|_{p_j}\] for all measurable functions $f_1,f_2,f_3$ on
$\BBR^2$.  Let $\vectxi^n$ and $\widetilde \Lambda_n$ be as given
above.  Then:
\begin{enumerate}
\renewcommand{\labelenumi}{(\alph{enumi})}
\item\footnote{Part (a) of this lemma was originally proved in
\cite{diestelgrafakos} and \cite{grafakosreguera}.  We do not
use it in the proof of the Main Theorem; however, see the
discussion after the proof in Section \ref{mainproof}.} For all
sequences of measurable functions $f_1^n, f_2^n, f_3^n$ on
$\BBR^2$ we have the estimate
\begin{equation}\label{meyer1}
\sum_n |\widetilde \Lambda_{n}(f_1^n,f_2^n,f_3^n)|
\lesssim_{p_1,p_2,p_3} \prod_{j=1}^3 \Big\| \Big( \sum_n |f_j^n|^2
\Big)^{1/2} \Big\|_{p_j}.
\end{equation}
\item Suppose further that for some $j_0 \in \{1,2,3\}$ we have
$\xi^n_{j_0} = \xi_0$ for all $n$, and let $f_1^n, f_2^n,$ and
$f_3^n$ be sequences of measurable functions such that
$f_{j_0}^n = f_0$ for all $n$.  Then we have the estimate
\begin{equation}\label{meyer2}
\sum_n |\widetilde \Lambda_{n}(f_1^n,f_2^n,f_3^n)|
\lesssim_{p_1,p_2,p_3} \|f_0\|_{p_{j_0}} \, \prod_{j\neq j_0} \Big\|
\Big( \sum_n |f_j^n|^2 \Big)^{1/2} \Big\|_{p_j}.
\end{equation}
\end{enumerate}
\end{lemma}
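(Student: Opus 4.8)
The plan is to adapt Fefferman's proof of Meyer's lemma (Lemma~1 of \cite{fefferman}) to the trilinear forms at hand. First I would peel off the harmless part of $\widetilde\Lambda_n$: by the observation in Section~\ref{notations}, $\widetilde\Lambda_n=c_1\Lambda_{P_{\vectv^n}}+c_2\Lambda_0$ for universal constants $c_1,c_2$, where $\Lambda_0(f_1,f_2,f_3)=\int f_1f_2f_3$; and $\Lambda_0$ satisfies \eqref{meyer1} and \eqref{meyer2} outright --- Cauchy--Schwarz in $n$ followed by H\"older in $x$, which is available since $\sum\tfrac1{p_j}=1$ with $p_j\in(0,\infty)$ forces $p_j\in(1,\infty)$. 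So it suffices to prove the two estimates with $\widetilde\Lambda_n$ replaced by the half-space form $\Lambda_{H_n}$, where $H_n\subset\Gamma$ is the tangent half-space to $\widetilde D$ at $\vectxi^n$; here $\Lambda_{H_n}\in\{\Lambda_{P_{\vectv^n}},\,\Lambda_0-\Lambda_{P_{\vectv^n}}\}$ according to orientation, so this substitution costs only another $\Lambda_0$-term. The argument then rests on three facts, each exactly as in Fefferman's proof: the ``type $\vectp$'' constant of a form $\Lambda_{\widetilde E}$ ($\widetilde E\subset\Gamma$) is unchanged when $\widetilde E$ is translated by a vector of $\Gamma$ (a modulation of the $f_j$, compatible with the $\delta$-constraint precisely because $\vectxi^n\in\Gamma$) or dilated (the ensuing powers of the dilation parameter cancel exactly, again because $\sum\tfrac1{p_j}=1$); confining the Fourier supports of the test functions to $O(\rho^{-1})$-balls about the coordinates $\xi_1^n,\xi_2^n,\xi_3^n$ of $\vectxi^n$ makes the rescaled $\chi_{\widetilde D}$ converge a.e.\ to $\chi_{H_n}$ as $\rho\to\infty$ (smoothness of $\partial\widetilde D$ near $\vectxi^n$); and Khintchine's inequality matches $\|\sum_n\epsilon_nF^n\|_{p_j}$ with $\|(\sum_n|F^n|^2)^{1/2}\|_{p_j}$ on average over the signs $\epsilon_n$.

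I would carry out (b) in full, as that is the part used later; relabelling the indices, take $j_0=3$, so $f_3^n=f_0$ and $\xi_3^n=\xi_0$ for all $n$. By standard reductions assume $\{n\}$ finite and all $\widehat{f_j^n}$ compactly supported, and (replacing $f_1^n$ by a unimodular multiple, which leaves $|f_1^n|$ alone) assume $\Lambda_{H_n}(f_1^n,f_2^n,f_0)\ge0$ for every $n$. Fix a large $\rho$, set $F_j^n(x):=e^{2\pi i\xi_j^n\cdot x}f_j^n(x/\rho)$ for $j=1,2$ and $F_3(x):=e^{2\pi i\xi_0\cdot x}f_0(x/\rho)$, and put $g_1:=\sum_n\epsilon_nF_1^n$, $g_2:=\sum_n\epsilon_nF_2^n$, $g_3:=F_3$. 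Expanding, $\Lambda_{\widetilde D}(g_1,g_2,g_3)=\sum_{m,n}\epsilon_m\epsilon_n\,\Lambda_{\widetilde D}(F_1^m,F_2^n,F_3)$. The crucial point: the $\vectxi^n$ are distinct and share their third coordinate, so $n\mapsto\xi_1^n$ is injective; since $\xi_1^m+\xi_2^n+\xi_0=\xi_1^m-\xi_1^n$ (using $\xi_2^n+\xi_0=\xi_2^n+\xi_3^n=-\xi_1^n$), the frequency supports of $F_1^m,F_2^n,F_3$ cannot together meet $\Gamma$ when $m\ne n$, once $\rho$ exceeds a threshold depending on the finitely many $\vectxi^n$. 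Hence only the diagonal survives, with sign $\epsilon_n\epsilon_n=1$, so $\Lambda_{\widetilde D}(g_1,g_2,g_3)=\sum_n\Lambda_{\widetilde D}(F_1^n,F_2^n,F_3)$ \emph{independently of the signs}; and a change of variables $\xi_j=\xi_j^n+\zeta_j/\rho$ together with the smoothness of $\partial\widetilde D$ at $\vectxi^n$ gives $\Lambda_{\widetilde D}(F_1^n,F_2^n,F_3)=\rho^2\bigl(\Lambda_{H_n}(f_1^n,f_2^n,f_0)+o_\rho(1)\bigr)$, whence $\Lambda_{\widetilde D}(g_1,g_2,g_3)=\rho^2\bigl(\sum_n\Lambda_{H_n}(f_1^n,f_2^n,f_0)+o_\rho(1)\bigr)$.

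On the other hand, the assumed bound $|\Lambda_{\widetilde D}(f_1,f_2,f_3)|\lesssim\prod_j\|f_j\|_{p_j}$ gives $|\Lambda_{\widetilde D}(g_1,g_2,g_3)|\lesssim\|g_1\|_{p_1}\|g_2\|_{p_2}\|g_3\|_{p_3}$ for every choice of signs, with $\|g_3\|_{p_3}=\rho^{2/p_3}\|f_0\|_{p_3}$ independent of $\epsilon$; Khintchine yields $\mathbb E_\epsilon\|g_j\|_{p_j}^{p_j}\lesssim_{p_j}\rho^2\|(\sum_n|f_j^n|^2)^{1/2}\|_{p_j}^{p_j}$ for $j=1,2$, so by Markov there is a single $\epsilon$ for which $\|g_j\|_{p_j}\lesssim_{p_j}\rho^{2/p_j}\|(\sum_n|f_j^n|^2)^{1/2}\|_{p_j}$ for $j=1,2$ at once. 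Substituting, the product $\rho^{2/p_1}\rho^{2/p_2}\rho^{2/p_3}=\rho^2$ cancels the $\rho^2$ on the left, and letting $\rho\to\infty$ yields $|\sum_n\Lambda_{H_n}(f_1^n,f_2^n,f_0)|\lesssim_{p_1,p_2,p_3}\|f_0\|_{p_3}\prod_{j=1,2}\|(\sum_n|f_j^n|^2)^{1/2}\|_{p_j}$; by the phase normalization the left side equals $\sum_n|\Lambda_{H_n}(f_1^n,f_2^n,f_0)|$, and unwinding the (linear, boundedly-coefficiented) relations among $\Lambda_{H_n}$, $\Lambda_{P_{\vectv^n}}$, $\Lambda_0$ and $\widetilde\Lambda_n$ produces \eqref{meyer2}. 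Part (a) runs the same way, but with all three slots randomized via three independent sign families in order to isolate the full diagonal $n_1=n_2=n_3$ of the triple sum; this is precisely the argument of \cite{diestelgrafakos} and \cite{grafakosreguera}, and since we do not use (a) we simply refer there for the details.

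The hard part is the bookkeeping of the off-diagonal (``resonant'') terms of $\Lambda_{\widetilde D}(g_1,g_2,g_3)$, namely those attached to triples $(\xi_1^{n_1},\xi_2^{n_2},\xi_3^{n_3})$ assembled from coordinates of \emph{distinct} boundary points: one must verify that any such triple either fails to lie on $\Gamma$ --- so the Fourier-support constraint annihilates the term once $\rho$ is large --- or, in case (a), washes out under the sign average; in tandem one needs that the rescaled limit of $\chi_{\widetilde D}$ at each $\vectxi^n$ is genuinely a half-space symbol, which is where the smoothness of $\partial\widetilde D$ is used. For (b) this is clean because freezing the $j_0$-th coordinate pins \emph{every} resonance to the diagonal --- which is exactly why (b), rather than the a priori stronger-looking (a), is both the statement we need and the easier one to prove.
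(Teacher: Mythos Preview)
Your argument is correct and rests on the same skeleton as the paper's (translation and dilation invariance of multiplier norms, passage to the tangent half-space in the limit, and Khintchine), but the execution of part~(b) is genuinely different. The paper freezes the $j_0$-th argument $\M_{r\xi_0}f_0$ and regards $\Lambda_{\widetilde D_r}(\,\cdot\,,\,\cdot\,,\M_{r\xi_0}f_0)$ as a bounded bilinear form on $L^{p_1}\times L^{p_2}$; writing this form as $\int (Tg_1)\,g_2$ for a bounded linear $T:L^{p_1}\to L^{p_2'}$, one gets the square function estimate from Cauchy--Schwarz in $n$, H\"older, and the classical Marcinkiewicz--Zygmund inequality, with no Fourier-support bookkeeping whatsoever. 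Your route instead keeps all three slots in play and kills the off-diagonal terms of $\Lambda_{\widetilde D}(g_1,g_2,g_3)$ by frequency separation, which is why you need the $\vectxi^n$ to be distinct and the $\widehat{f_j^n}$ to be compactly supported --- both harmless for the application in Section~\ref{mainproof}, but strictly speaking extra hypotheses not present in the lemma as stated. What you gain is a self-contained argument that does not cite any black-box vector-valued extension (Grafakos--Martell for~(a), Marcinkiewicz--Zygmund for~(b)); what the paper gains is a shorter proof that works for arbitrary, possibly repeated, boundary points $\vectxi^n$. Your observation that the hypothesis $\xi_{j_0}^n\equiv\xi_0$ forces the off-diagonal frequency triples off $\Gamma$ is a nice explanation of \emph{why} freezing one coordinate is exactly what makes~(b) cleaner than~(a).
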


The point of part (b) of this lemma is that we can afford to replace
one of the sequences $(f_j^n)_n \in L^{p_j}(\BBR^2, \ell^2)$ from
part (a) with a single function $f_0 \in L^{p_j}(\BBR^2, \BBC)$, at
the expense of requiring an extra condition on the boundary points
$\vectxi^n$; this burden will account for the coordinate-slice
restriction in the curvature conditions of the Main Theorem.
\begin{proof}  For $r>0$, let $\widetilde D_r$ denote the $r$-dilate $\{r\vectxi \;
| \; \vectxi \in \widetilde D\}$ of $\widetilde D$, and set
$\widetilde D_{r,n} = \Dr - r\vectxi^n$, so that \[\chi_{\Drn}
\longrightarrow \chi_{P_n}\] pointwise almost everywhere on $\Gamma$
as $r\rightarrow \infty$.  Then by dominated convergence we have
\begin{align*}
\Lambda_{n}(f_1,f_2,f_3) &= \lim_{r\rightarrow \infty}
\Lambda{\Drn}(f_1,f_2,f_3) \nonumber \\
& = \lim_{r\rightarrow \infty} \Lambda_{\Dr}(\M_{r\xi^n_1} f_1,
\M_{r\xi^n_2} f_2, \M_{r\xi^n_3} f_3)
\end{align*}
for all Schwartz functions $f_1,f_2$ and $f_3$, where $\M_\xi$
denotes the modulation operator defined by $\M_\xi f(x) = e^{2\pi i
\xi \cdot x}f(x)$.

Now, since we assume boundedness of the trilinear form
$\Lambda_{\widetilde D}$, the forms $\Lambda_{\Dr}$ are
\emph{uniformly} bounded on $L^{p_1} \times L^{p_2} \times L^{p_3}$,
due to the dilation-invariance of Fourier multiplier operator norms.
Summing in $n$ and appealing to Theorem 6 of
\cite{grafakosmartell}\footnote{This theorem states that if
$\operatorname T:L^{p_1}(X) \times L^{p_2}(X) \rightarrow
L^{p_3'}(X)$ is bounded, then the natural vector-valued extension of
$\T$ maps $L^{p_1}\big(X, \ell^2(\BBN)\big) \times L^{p_2}\big(X,
\ell^2(\BBN)\big)$ to $L^{p_3'}\big(X, \ell^2(\BBN \times
\BBN)\big)$ continuously.  This is a natural analogue of the
classical square function estimate for linear operators used in
\cite{fefferman}; like its linear predecessor, its proof is based on
Khintchine's inequality.} (together with a straightforward
application of duality), we obtain
\begin{align*}
\sum_n |\Lambda_{n}(f_1^n,f_2^n,f_3^n)| &= \lim_{r\rightarrow
\infty} \sum_n |\Lambda_{\Dr}(\M_{r\xi^n_1} f_1^n,
\M_{r\xi^n_2} f_2^n, \M_{r\xi^n_3} f_3^n)| \\
&\lesssim_{p_1,p_2,p_3} \lim_{r\rightarrow \infty} \prod_{j=1}^3
\Big\| \Big( \sum_k | \M_{r\xi^n_j} f_j^n|^2 \Big)^{1/2}
\Big\|_{p_j}
\\
&= \prod_{j=1}^3 \Big\| \Big( \sum_n |f_j^n|^2 \Big)^{1/2}
\Big\|_{p_j}
\end{align*} for all sequences $f_1^n, f_2^n, f_3^n \in \mathcal
S(\BBR^2)$; this is the estimate used in \cite{diestelgrafakos} and
\cite{grafakosreguera}.  Part (a) of the lemma follows immediately,
since $\widetilde \Lambda_n$ is a linear combination of $\Lambda_n$
and the pointwise-product trilinear form.

We now turn to part (b).  Fix $j_0 \in \{1,2,3\}$ such that the
points $\vectxi^n \in \partial \widetilde D$ above satisfy
$\xi^n_{j_0} = \xi_0$ for all $n$, and consider three sequences
$f_1^n, f_2^n, f_3^n$ as before, with the additional caveat that
$f_{j_0}^n = f_0$ for all $n$.  Then for each $r$, one of the
arguments of \[\Lambda_{\Dr}(\M_{r\xi^n_1} f_1^n, \M_{r\xi^n_2}
f_2^n, \M_{r\xi^n_3} f_3^n)\] is constant in $n$, so we may view
this expression as a \emph{bilinear} form in the other two
arguments.  Appealing to square function estimates for \emph{linear}
operators\footnote{Cf.\ the previous footnote.} in lieu of the
Grafakos--Martell estimate used above, we may proceed as before to
obtain
\begin{equation*}
\sum_n |\widetilde \Lambda_{n}(f_1^n,f_2^n,f_3^n)|
\lesssim_{p_1,p_2,p_3} \|f_0\|_{p_{j_0}} \, \prod_{j\neq j_0} \Big\|
\Big( \sum_n |f_j^n|^2 \Big)^{1/2} \Big\|_{p_j}
\end{equation*} as desired; the term $\|f_0\|_{p_{j_0}}$ appears via
the norm of the linear operator associated to the aforementioned
bilinear form.
\end{proof}

\section{Proof of the Main Theorem}\label{mainproof}
Let $\widetilde D$ be a domain in $\Gamma$ with curvature in a
$j_0$-th coordinate slice, and suppose toward a contradiction that
we have the estimate
\[|\Lambda_{\widetilde D}(f_1,f_2,f_3)| \lesssim \prod_{j=1}^3
\|f_j\|_{p_j}.\]  The goal now is to establish sufficient lower
bounds for $\widetilde \Lambda_n(f_1^n, f_2^n, f_3^n)$ to contradict
part (b) of Lemma \ref{meyer}; of course, this entails a suitable
choice of the points $\vectxi^n \in
\partial \widetilde D$ that define $\widetilde \Lambda_n$, as well as a
suitable choice of the functions $f_j^n$. As we resign ourselves to
following Fefferman's approach, we will eventually take the $f_j^n$
to be characteristic functions of aptly chosen rectangles in
$\BBR^2$, and the lower bounds on $\widetilde \Lambda_n$ will just
be pointwise lower bounds for the operators $\T_{P_n}$ in disguise.

Recall that for the moment we work within the Banach triangle
outside the local $L^2$ setting; namely, we seek to prove the
unboundedness of $\Lambda_{\widetilde D}$ on $L^{p_1}(\BBR^2) \times
L^{p_2}(\BBR^2) \times L^{p_3}(\BBR^2)$, with $\sum \frac{1}{p_{j}}
=1$ and $p_{i} <2$ for exactly one $i \neq j_0$.\footnote{For the
remainder of the proof we suppress all dependence of constants on
the exponents $p_j$.}  The key insight of Fefferman is that one can
exploit Besicovitch sets to make the right-hand side of the square
function estimate (\ref{meyer2}) arbitrarily small while keeping the
left-hand side large; achieving this is only slightly more involved
in our setting than in the linear. Indeed, let $K_\varepsilon$ be a
Besicovitch set as in Definition \ref{besicovitch}, with
$K_\varepsilon = \bigcup_1^N R_n$. Then H\"older's inequality yields
\begin{equation}\label{pless2}
\Big \| \Big(\sum_n |\chi_{R_n}|^2 \Big)^{1/2}\Big\|_{p_i} \, < \,
\varepsilon^{\frac{2-p_i}{2p_i}},
\end{equation}
which can be made arbitrarily small by decreasing $\varepsilon$,
since $p_i<2$.  To exploit this estimate, we use the fact that
$\partial \widetilde D$ has curvature in a $j_0$-th coordinate
slice; taking any $\varepsilon >0$, we have points $\vectxi^1,
\ldots ,\vectxi^N$, normal vectors $\vectv^1, \ldots, \vectv^N$, and
a Besicovitch set $K_\varepsilon = \bigcup_1^N R_n$ provided by
Property \ref{besicovitchproperty} from Section \ref{notations}. In
light of estimate (\ref{pless2}) we set $f_i^n = \chi_{R_n}$, so
that part (b) of Lemma \ref{meyer} gives
\begin{equation}\label{estimate1}
\sum_{n=1}^N |\widetilde \Lambda_n(f^n_1,f^n_2,f^n_3)| \;\lesssim\;
\varepsilon^{\frac{2-p_i}{2p_i}} \;\|f_{0}\|_{p_{j_0}} \;\Big\|
\Big( \sum_{n=1}^N |f_k^n|^2 \Big)^{1/2}\Big\|_{p_k}
\end{equation}
for all sequences of functions $f_k^n$ and $f_{j_0}^n$ such that
$f_{j_0}^n = f_0$ for all $n$; here $k \in \{1,2,3\}$ is the
remaining index $j_0 \neq k \neq i$.

\bigskip We now produce $f_0$ and $f_k^n$ that will essentially
maximize the left-hand side of (\ref{estimate1}).  In fact, we will
set $f_k^n = \chi_{R_n'}$, where $R_n'$ is the reach\footnote{Of
course, there are two choices of reach depending on our choice of
orientation; we will always choose the one that is obviously
expedient.} of $R_n$ as given in Definition \ref{besicovitch}, and
$f_0 = \chi_Q$ for some large rectangle $Q$ to be determined. In
this setting we have
\begin{align}
\widetilde \Lambda_n (f_1^n,f_2^n,f_3^n) & = \int_\BBR \int_{\BBR^2}
\prod_{j=1}^3\, f_j^n (x - tv_j^n) \; \mathrm dx \, \frac{\mathrm dt}
{t} \nonumber\\
&= \iint \chi_{R_n}(x-tv_i^n) \,\chi_{R_n'}(x - tv_k^n) \,\chi_Q (x
- v_{j_0}^n) \; \mathrm dx \,\frac{\mathrm dt}{t} \nonumber\\
& = \int_\BBR \int_{R_n} \chi_{R_n'}\big( x - t(v^n_k - v^n_i) \big)
\; \chi_Q\big( x - t(v^n_{j_0} - v^n_i) \big) \; \mathrm dx \,
\frac{\mathrm dt}{t}.
\label{sliding}
\end{align}
Note here that the vector $v^n_k - v^n_i$ is equal to
$\pm(v^n_{\sigma(j_0)} - v^n_{\sigma^2(j_0)})$, so that $R_n$ is
parallel to this vector with $R_n' = R_n - 2(v^n_k - v^n_i)$ by
Property \ref{besicovitchproperty}.  Also note that $\vectv^n$ is
chosen by design so that $|v^n_k - v^n_i| = 1$; here and in what
follows, it may be helpful to think in terms of the configuration
triangles of Section \ref{notations} (see Figure
\ref{slidingfigure}).
\begin{figure}
\setlength{\unitlength}{12mm}
\begin{picture}(8,6.5)(-3,-2.3)
\thicklines
\put(-.01,1.92){\vector(-1,-3){.632}}
\put(1.91,0){\vector(-1,0){2.56}}
\put(1.92,0){\vector(-1,1){1.92}}
\put(-1.15,.9){$\scriptstyle{v_k^n-v_i^n}$}
\put(.37,-.3){$\scriptstyle{v_{j_0}^n-v_i^n}$}
\put(1.2,1){$\scriptstyle{v_{j_0}^n-v_k^n}$}
\put(-.4,2.05){\line(1,3){.632}}
\put(-.4,2.05){\line(-3,1){.5}}
\put(-.9,2.22){\line(1,3){.632}}
\put(.23,3.95){\line(-3,1){.5}}
\put(-1.04,.08){\line(-1,-3){.632}}
\put(-1.04,.08){\line(-3,1){.5}}
\put(-1.54,.25){\line(-1,-3){.632}}
\put(-1.67,-1.82){\line(-3,1){.5}}
\put(2.2,.4){\line(1,0){2.4}}
\put(4.6,.4){\line(0,-1){2.7}}
\put(2.2,.4){\line(0,-1){2.7}}
\put(2.2,-2.3){\line(1,0){2.4}}
\put(3.3,-1){$Q$}
\put(-1.8,-.9){$R_n$}
\put(-.525,3){$R_n'$}
\end{picture}
\caption{}\label{slidingfigure}
\end{figure}
 Given these observations, it is clear from
(\ref{sliding}) that if we choose $Q$ to be a large enough rectangle
we have
\begin{equation}\label{lowerbound}
|\widetilde \Lambda_n (f_1^n,f_2^n,f_3^n)| \;\gtrsim \;|R_n| \;=\;
\frac{1}{N}.
\end{equation}
Moreover, recall that we have the provisos $K_\varepsilon \subset
K^*$ for all $\varepsilon$ and some fixed compact set $K^*$, and
$\vectv^n \in A^*$ for some compact $A^*$ which is again independent
of $\varepsilon$.  Thus we may in fact choose such a $Q$
\emph{independently} of $n$ and $\varepsilon$; the important fact
here is that we can set $f_{j_0}^n = f_0 := \chi_Q$ for all $n$ and
ensure \[\|f_0\|_{p_{j_0}} \lesssim 1\] independently of
$\varepsilon$.  If the preceding discussion seems lacking in
motivation, the reader may note that in the heuristic limiting case
of $Q= \BBR^2$, the bilinear forms \[\Lambda'_n(f,g) := \widetilde
\Lambda_n(f,g,\chi_{\BBR^2})\] correspond to the directional Hilbert
transforms (or linear half-space multipliers) used in Fefferman's
original argument.  (Note also that in the case $p_{j_0}=\infty$
this observation can be used trivially to deduce unboundedness from
Fefferman's proof.) Finally, we observe that since the rectangles
$R_n'$ are \emph{disjoint}, we also have
\[\Big\| \Big( \sum_{n=1}^N |f_k^n|^2 \Big)^{1/2}\Big\|_{p_k} \; =
\; \Big\| \sum_{n=1}^N \chi_{R_n'} \Big\|_{p_k} = 1.\]

\smallskip
Thus, combining estimates (\ref{estimate1}) and (\ref{lowerbound}),
we obtain \[1 \lesssim \sum_{n=1}^N |\widetilde \Lambda_n
(f_1^n,f_2^n,f_3^n)| \lesssim \varepsilon^{\frac{2-p_i}{2p_i}};\]
this renders our original boundedness assumption absurd and
completes the proof of the Main Theorem. \hfill $\Box$

\bigskip
For the sake of contrast, we now briefly summarize the approach of
\cite{diestelgrafakos} and \cite{grafakosreguera}, which actually
yields the following:

\begin{theorem}\label{grafakostheorem}
Let $\widetilde D$ be a domain in $\Gamma$ such that $\partial
\widetilde D$ possesses a family of normal vectors
\[\big\{\vectv^\theta = \big(v_\theta, \lambda_\theta v_\theta,
-(1+\lambda_\theta)v_\theta\big) \; | \; \theta \in \mathcal
I\big\}\subset \Gamma\] for some index set $\mathcal I$, where the
collection $\{v_\theta\, |\, \theta \in \mathcal I\} \subset \BBR^2$
yields Besicovitch sets as in Definition \ref{besicovitch}, and
$\lambda_\theta \sim 1$ for all $\theta \in \mathcal I$. Then the
trilinear form $\Lambda_{\widetilde D}$ is not of type $\vectp =
(p_1,p_2,p_3)$ whenever $\sum \frac{1}{p_j} =1$ and $p_i < 2$ for
some $i \in \{1,2,3\}$.
\end{theorem}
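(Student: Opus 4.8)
The plan is to repeat the proof of the Main Theorem, but invoking part~(a) of Lemma~\ref{meyer} in place of part~(b); the projectively diagonal structure of the normals $\vectv^\theta$ will make up for the missing coordinate-slice hypothesis. Suppose toward a contradiction that $\Lambda_{\widetilde D}$ is of type $\vectp$ with $\sum\frac1{p_j}=1$ and $p_i<2$ for some $i$. Fix $\varepsilon>0$. Since $\{v_\theta\mid\theta\in\mathcal I\}$ yields Besicovitch sets, I would pick a finite subfamily $\vectv^1,\dots,\vectv^N$ of the given normals (with $\vectv^n=\vectv^{\theta_n}$ a normal to $\partial\widetilde D$ at a point $\vectxi^n=\vectxi^{\theta_n}$, and $|v_{\theta_n}|=1$) for which $\{v_{\theta_n}\}$ produces a Besicovitch set $K_\varepsilon=\bigcup_{n=1}^N R_n\subset K^*$ as in Definition~\ref{besicovitch}, each $R_n$ a $1\times\frac1N$ rectangle with long side parallel to $v_{\theta_n}$. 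Because $\lambda_\theta\sim1$, the vectors $\vectv^n$ lie in a fixed compact subset of $\Gamma$, and Lemma~\ref{meyer}(a), applied to the forms $\widetilde\Lambda_n:=\widetilde\Lambda_{\vectv^n}$, yields the square-function estimate (\ref{meyer1}).

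The crux is a reformulation of $\widetilde\Lambda_n$ that the diagonal structure permits. Writing $v^n:=v_{\theta_n}$, so that $\vectv^n=(v^n,\lambda_n v^n,-(1+\lambda_n)v^n)$, the configuration triangle of Figure~\ref{configuration} degenerates: every edge $v^n_j-v^n_{j'}$ is a scalar multiple of $v^n$. Changing variables $x\mapsto x+tv^n_3$ in the definition of $\widetilde\Lambda_n$ turns it into
\[\widetilde\Lambda_n(f_1^n,f_2^n,f_3^n)=\int_\BBR\int_{\BBR^2}f_1^n\big(x-t\mu_1^n v^n\big)\,f_2^n\big(x-t\mu_2^n v^n\big)\,f_3^n(x)\,\mathrm dx\,\frac{\mathrm dt}{t},\]
with $\mu_1^n=2+\lambda_n$ and $\mu_2^n=1+2\lambda_n$. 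The point of centering on the third component is that both sliding speeds $\mu_1^n,\mu_2^n$ are $\sim1$, in particular bounded away from $0$, for \emph{every} admissible $\lambda_n$ (whereas $v^n_1-v^n_2=(1-\lambda_n)v^n$ could be small). This uniform nondegeneracy is exactly what lets us avoid freezing one of the functions in $n$ --- the device that, in the Main Theorem, forced the use of part~(b) and hence the coordinate-slice restriction.

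It remains to feed in suitable functions, just as in the proof of the Main Theorem. In the slot with $p_i<2$ I would put $f_i^n:=\chi_{R_n}$; exactly as for (\ref{pless2}), H\"older's inequality gives $\big\|(\sum_n|\chi_{R_n}|^2)^{1/2}\big\|_{p_i}<\varepsilon^{(2-p_i)/(2p_i)}$, which vanishes as $\varepsilon\to0$. In the other two slots I would put $f_j^n:=\chi_{R_n^{(j)}}$, where each $R_n^{(j)}$ is a reach of $R_n$ in the sense of Definition~\ref{besicovitch}, possibly ``fattened'' to length $L$ along $v^n$ for a fixed constant $L$; using that all the translations in the formula above are parallel to $v^n$, these reaches are positioned so that the sliding estimate of the Main Theorem, (\ref{sliding})--(\ref{lowerbound}), goes through (the roles of the three rectangles permuting with $i$, one of the reaches taken long enough to play the part of the fixed rectangle $Q$ there), yielding $|\widetilde\Lambda_n|\gtrsim|R_n|=\frac1N$. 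Since we are invoking part~(a), it is harmless that these reaches now depend on $n$. By the Perron--tree construction (cf.\ \cite{stein}) one can arrange that, for each fixed $j$, the $R_n^{(j)}$ have bounded overlap inside a fixed compact set, so that $\big\|(\sum_n|\chi_{R_n^{(j)}}|^2)^{1/2}\big\|_{p_j}\lesssim1$. With these choices the right-hand side of (\ref{meyer1}) is $\lesssim\varepsilon^{(2-p_i)/(2p_i)}\to0$, while summing the lower bounds gives a left-hand side $\gtrsim N\cdot\frac1N=1$ --- the desired contradiction.

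The main obstacle is the geometric bookkeeping in the last paragraph: one must coordinate the Besicovitch directions with the companion rectangles so that (i) each family $\{R_n^{(j)}\}_n$ has bounded overlap within a fixed compact set, and (ii) the three characteristic functions genuinely overlap on a set of measure $\gtrsim|R_n|$ for $t$ ranging over an interval bounded away from the principal-value singularity at $t=0$, so that no cancellation in $\frac{\mathrm dt}{t}$ can occur. This is precisely where the flexibility of the Perron--tree construction is used, together with the hypothesis $\lambda_\theta\sim1$, which keeps the sliding speeds uniformly nondegenerate; everything else is routine. Apart from the observation that the degeneracy of the configuration triangle lets part~(a) of Lemma~\ref{meyer} stand in for part~(b), this is exactly the argument of \cite{diestelgrafakos} and \cite{grafakosreguera}.
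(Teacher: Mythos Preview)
Your proposal is correct and follows essentially the same route as the paper's sketch: invoke part~(a) of Lemma~\ref{meyer}, exploit the degeneracy of the configuration triangle so that \emph{both} $f_j^n$ for $j\neq i$ can be taken as characteristic functions of reaches of $R_n$, and contradict the square-function estimate. The only difference is cosmetic: the paper keeps all three rectangles of size $\sim 1/N$ (no fattening is needed---since the three rectangles are collinear along $v^n$, the lower bound $|\widetilde\Lambda_n|\gtrsim 1/N$ follows directly; see Figure~\ref{degeneratefigure}) and takes the two families of reaches $\{Q_j^n\}_n$ to be disjoint rather than merely of bounded overlap.
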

This approach also follows Fefferman's argument, using part (a) of
Lemma \ref{meyer} where we have used part (b).  Note that appealing
to part (a) allows one to eliminate any restriction on the points of
$\partial \widetilde D$ at which the normal vectors $\vectv^\theta$
occur; in exchange, use of the square function estimate
(\ref{meyer1}) forces a rather stringent condition on the normal
vectors themselves (\emph{viz.}, all three of the component vectors
of $\vectv^\theta$ must be parallel in $\BBR^2$).  To prove the
theorem, as above one chooses appropriate normal vectors $\vectv^n =
\vectv^{\theta_n}$ associated to some Besicovitch set $K_\varepsilon
= \bigcup_1^N R_n$ and considers the forms $\widetilde \Lambda_n :=
\widetilde \Lambda_{\vectv^n}$.  Again setting $f_i^n = \chi_{R_n}$,
part (a) of Lemma \ref{meyer} then gives
\begin{equation} \label{estimate2}
\sum_{n=1}^N | \widetilde \Lambda_n (f_1^n, f_2^n, f_3^n)|\;
\lesssim \; \varepsilon^{\frac{2-p_i}{2p_i}}\; \prod_{j \neq i}
\Big\| \Big( \sum_{n=1}^N |f_j^n|^2 \Big)^{1/2}\Big\|_{p_j}.
\end{equation}
At this point, if one wishes to follow Fefferman by setting $f_j^n =
\chi_{Q_j^n}$ for some rectangles $Q_j^n$, any productive use of
estimate (\ref{estimate2}) clearly prohibits one from taking
$|Q_j^n| \gtrsim 1$ given that $p_j > 2$ for $j \neq i$.  However,
since the component vectors $v_j^n$ of $\vectv^n$ are all parallel,
the configuration triangle for $\widetilde \Lambda_n$ is degenerate
(\textit{i.e}.\ all vertices are collinear); thus one may choose
$Q_j^n$ to be appropriate ``reaches'' of $R_n$ for \emph{both}
$j\neq i$ and still obtain \[|\widetilde \Lambda_n (f_1^n, f_2^n,
f_3^n)| \gtrsim \frac{1}{N}\] as above.  See Figure
\ref{degeneratefigure}, which should be contrasted with Figure
\ref{slidingfigure}. Since we were able to choose $f_j^n =
\chi_{Q_j^n}$ with $|Q_j^n| \sim \frac{1}{N}$ and $\{Q_j^n\}_n$
disjoint for each $j \neq i$, the right-hand side of
(\ref{estimate2}) is controlled by $\varepsilon^{\frac{2-
p_i}{2p_i}}$, and we obtain a contradiction as above.
\begin{figure}
\setlength{\unitlength}{1cm}
\begin{picture}(10,3)(-6.5,-2)
\thicklines
\put(-4,-1){\circle*{.1}}
\put(-4,-1){\vector(1,0){6}}
\put(-4,-1){\vector(1,0){4.5}}
\put(2,-.5){\line(1,0){1.5}}
\put(2,-.5){\line(0,1){.5}}
\put(2,0){\line(1,0){1.5}}
\put(3.5,-.5){\line(0,1){.5}}
\put(2.6,.2){$R_n$}
\put(-1,-.5){\line(1,0){1.5}}
\put(-1,-.5){\line(0,1){.5}}
\put(-1,0){\line(1,0){1.5}}
\put(.5,-.5){\line(0,1){.5}}
\put(-.9,.2){$Q_1^n = R_n'$}
\put(-6.5,0){\line(1,0){2.5}}
\put(-6.5,-.5){\line(0,1){.5}}
\put(-6.5,-.5){\line(1,0){2.5}}
\put(-4,-.5){\line(0,1){.5}}
\put(-5.5,.2){$Q_2^n$}
\end{picture}
\caption{Degenerate configuration triangle}\label{degeneratefigure}
\end{figure}
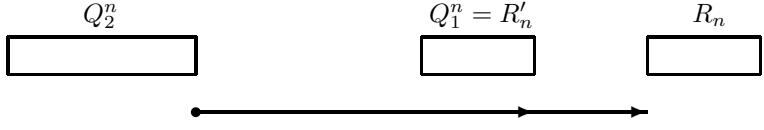

In fact, an examination of the geometric considerations in the proof
shows that an approach toward contradicting part (a) of Lemma
\ref{meyer} essentially necessitates the use of degenerate
configuration triangles, provided one insists on exploiting
Besicovitch sets and taking the $f_j^n$ to be characteristic
functions of rectangles.  Thus, with such an approach one cannot
dispense with the restriction on the normal vectors appearing in
Theorem \ref{grafakostheorem}; in particular, one cannot treat
generic strictly convex domains in $\Gamma$.

\section{Unboundedness on the border of and outside the Banach triangle}\label{nonbanach}
Our arguments thus far have been restricted to exponent triples
$\vectp = (p_1,p_2,p_3)$ in the interior of the ``Banach triangle,''
with $1 < p_1,p_2,p_3 < \infty$; as mentioned in the Introduction,
one can also obtain unboundedness results for $\vectp$ outside this
range.  However, to phrase such a general result in terms of
trilinear forms, the notion of ``type $\vectp$'' is unsuitable; in
fact, it is not hard to see that for $\vectp$ outside the Banach
triangle the \emph{only} trilinear form of type $\vectp$ is the $0$
form (see \textit{e.g.\ }Chapter 3 of \cite{thiele}).  On the other
hand, it is perfectly reasonable (\textit{i.e}.\ nontrivial) to
investigate the boundedness of bilinear operators from $L^p \times
L^q$ to $L^r$ with $\frac{1}{2} \leq r \leq 1$ (so that $r' \leq -1$
or $r' = \infty$), and one would hope to be able to treat such
questions symmetrically. One way of dealing with this state of
affairs is to replace the notion of type $\vectp$ with that of
``generalized restricted type $\vectp$,'' which we review below
(once again, the reader may consult \cite{thiele} for a more
detailed treatment of this formalism).\footnote{Our notation is
``reciprocal'' to that of \cite{thiele}; in the notation therein,
our ``generalized restricted type $\vectp$'' corresponds to
generalized restricted type $\alpha = \Big(\frac{1}{p_1},
\frac{1}{p_2}, \frac{1}{p_3}\Big)$.} Nonetheless, the reader will
notice that we are forced to abandon the symmetric framework of
trilinear forms when treating the boundary of the Banach triangle,
where $p_j=\infty$ for some $j$.

Let us call an exponent triple $\vectp = (p_1,p_2,p_3)$
\emph{admissible} if $|p_k| \geq 1$ for all $k$, $p_j \leq -1$ for
at most one $j$, and $\sum_{k=1}^3 p_k^{-1} = 1$; the $p_k$ are of
course allowed to be infinite.  For such $\vectp$, we say the form
$\Lambda$ is of \emph{generalized restricted type $\vectp$} if for
all triples $(E_1, E_2, E_3)$ of measurable subsets of $\BBR^2$
there exists a subset $\widetilde E_j \subset E_j$ with $|\widetilde
E_j| \geq \frac{1}{2}|E_j|$ for which we have the estimate
\[|\Lambda(f_1,f_2,f_3)| \lesssim \prod_{k=1}^3 |E_k|^{1/p_k}\]
whenever $|f_k| \leq \chi_{E_k}$ and moreover $|f_j| \leq
\chi_{\widetilde E_j}$.  (If in fact $p_k \geq 1$ for all $k$, then
the exceptional index $j$ may be chosen at will.)  Of course, inside
the Banach triangle generalized restricted type $\vectp$ is implied
by type $\vectp$; a generalized restricted type estimate for a form
$\Lambda$ gives a restricted weak-type estimate for an appropriate
bilinear dual of the operator associated to $\Lambda$.

Finally, we introduce some geometric terminology.  We say that a
vector $\vectv \in \Gamma$ is \emph{degenerate} if $\vectv =
(\lambda_1 v, \lambda_2 v, \lambda_3 v)$ for some $v \in S^1$ and
some scalars $\lambda_j \in \BBR$. Furthermore, we say that $\vectv$
is \emph{strongly degenerate} if $\lambda_j = 0$ for some $j$.  A
domain $\widetilde D \subset \Gamma$ is called (\emph{strongly})
\emph{degenerate} if \emph{every} normal vector to $\partial
\widetilde D$ is (strongly) degenerate.  In the following discussion
we will omit any consideration of strongly degenerate domains; if
$\partial \widetilde D$ is smooth such domains are given by
particular cylinder sets, and the boundedness properties of their
associated forms (or operators) fall within the purview of the
linear theory.

The following theorem\footnote{It has been pointed out to the author
by C. Thiele and C. Demeter that this result has been established
independently as folklore, with essentially the same proof; the
author is also indebted to C. Thiele for suggesting the argument
used to prove part (b) of the theorem.} shows that, in sufficiently
nondegenerate cases, the boundary curvature of a domain is actually
\emph{irrelevant} to the boundedness properties of the associated
trilinear form or bilinear operator.

\begin{theorem}\label{halfplanetheorem}
Let $P_{\vectv} = \{\vectxi \in \Gamma \, | \, \vectxi \cdot \vectv
>0\}$ be a nondegenerate half-space in $\Gamma$, and let $\vectp =
(p_1,p_2,p_3)$ be an admissible triple.
\begin{enumerate}\renewcommand{\labelenumi}{(\alph{enumi})}
\item If $p_i \leq -1$ for some $i$, the trilinear form
$\Lambda_{P_\vectv}$ is not of generalized restricted type
$\vectp$.
\item If $p_i = \infty$ for some $i$, the bilinear multiplier
operator $\T_{\Phi^{-1}(P_\vectv)}$ associated to the canonical
preimage of $P_\vectv$ in $\BBR^4$ is unbounded from
$L^{p_1}(\BBR^2) \times L^{p_2}(\BBR^2)$ to $L^{p_3'}(\BBR^2)$.
\end{enumerate}
\end{theorem}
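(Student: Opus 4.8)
\emph{Proposal.} The plan is to strip away the extra dimension and reduce both statements to endpoint failures of classical one-variable operators, using the forms $\widetilde\Lambda_{\vectv}$ of Section~\ref{notations} as the bridge.

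\emph{Reduction.} Up to the pointwise-product form (which is of type $\vectp$ inside the Banach triangle and hence innocuous for the statements at hand), $\Lambda_{P_{\vectv}}$ coincides with $\widetilde\Lambda_{\vectv}$, whose associated bilinear operator is $\operatorname S_{\vectw}(f_1,f_2)(x)=\operatorname{p.v.}\int f_1(x-tw_1)f_2(x-tw_2)\,\tfrac{\mathrm dt}{t}$ with $\vectw=\Phi^*(\vectv)$. Nondegeneracy of $P_{\vectv}$ is exactly the statement that the configuration triangle of $\vectv$ is nondegenerate, i.e.\ that $w_1,w_2$ are linearly independent in $\BBR^2$; hence, after an (exponent- and generalized-restricted-type-preserving) linear change of variables in $\BBR^2$, we may take $w_1=e_1$, $w_2=e_2$, so that $\operatorname S_{\vectw}(f_1,f_2)(x_1,x_2)=\operatorname{p.v.}\int f_1(x_1-t,x_2)\,f_2(x_1,x_2-t)\,\tfrac{\mathrm dt}{t}$. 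The half-space $P_{\vectv}$ is carried to another nondegenerate half-space, and since the forms $\Lambda_{P_{\vectv}}$ are symmetric in their three arguments (permuting the arguments permutes the coordinates of $\vectv$, a nondegenerate operation), we are free to place the distinguished index $i$ in whichever slot is convenient.

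\emph{The one-variable reductions.} The crucial feature of $\operatorname S_{\vectw}$ in the above form is that it degenerates when one argument is frozen to an indicator of a slab. If $f_2=\chi_{\BBR\times I}$ (or a large finite truncation), then for $x_2\in I$ the factor $f_2(x_1,x_2-t)$ is supported in a bounded $t$-interval, and up to a negligible truncation error $\operatorname S_{\vectw}(f_1,\chi_{\BBR\times I})(x_1,x_2)=\chi_I(x_2)\,H\!\big(f_1(\cdot,x_2)\big)(x_1)$, $H$ the one-dimensional Hilbert transform in $x_1$; replacing the frozen indicator by $e^{2\pi i\lambda(x_2)x_1}\chi_{\BBR\times I}$ yields instead a modulated Hilbert transform, whose supremum over the linearizing function $\lambda$ is the Carleson maximal operator in $x_1$. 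These are the operators whose endpoint behaviour we propagate back to $\T_{\Phi^{-1}(P_{\vectv})}$.

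\emph{Part (b).} After dualizing the form as needed, $p_i=\infty$ leaves us with an operator $L^{p_1}(\BBR^2)\times L^{p_2}(\BBR^2)\to L^{p_3'}(\BBR^2)$ in which one of $p_1,p_2$ equals $\infty$. If the remaining two exponents are $1$ and $\infty$ (operator $L^1\times L^\infty\to L^1$ or $L^\infty\times L^\infty\to L^\infty$), freeze the $L^\infty$ argument to $\chi_{\BBR\times I}$: boundedness would force $H$ to be bounded $L^1\to L^1_{\mathrm{loc}}$, resp.\ $L^\infty\to L^\infty_{\mathrm{loc}}$, which fails (test against an approximate identity, resp.\ against $\chi_{\{x_1>0\}}$). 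In the remaining case the operator is $L^\infty\times L^{p}\to L^{p}$ with $1<p<\infty$; here the frozen-indicator reduction only yields $H\colon L^p\to L^p$, which holds, so one must instead insert $f_1=e^{2\pi i\lambda(x_2)x_1}$ (of unit $L^\infty$ norm) for an arbitrary measurable $\lambda$ and read off pointwise values to conclude that the Carleson maximal operator would be bounded $L^\infty\to L^{p}_{\mathrm{loc}}$. \emph{This is the main obstacle}: it rests on the classical but nontrivial $L^\infty$-unboundedness of the Carleson operator.

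\emph{Part (a).} Here the ``type $\vectp$'' language is unavailable, so one works directly with the generalized-restricted-type formalism and the forms $\widetilde\Lambda_{\vectv}$. The natural test is against $f_1=\chi_{\BBR\times I_1}$, $f_2=\chi_{\BBR\times I_2}$ (suitably truncated) with disjoint intervals $I_1,I_2$: a direct computation shows that $\operatorname S_{\vectw}(\chi_{\BBR\times I_1},\chi_{\BBR\times I_2})$ is then supported in the \emph{unbounded} strip $\BBR\times I_1$ and is bounded below there by a positive constant (no cancellation, as the relevant $t$-interval avoids the origin), so \emph{no} exceptional subset of measure at least half of a chosen $E_3\subset\BBR\times I_1$ can destroy the resulting lower bound. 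Pairing this lower bound against the generalized-restricted-type inequality, and using the anisotropic freedom to slide and rescale the slabs together with the failure of $H$ to satisfy the corresponding restricted-type estimate below $L^1$ (a consequence of its scale invariance), produces the contradiction. The main obstacle is precisely this matching step: since all the natural quantities are homogeneous of the correct degree, one must tune the anisotropic dimensions of the slabs so that the scale-balanced leading terms cancel and a genuinely divergent (logarithmic) ratio survives.
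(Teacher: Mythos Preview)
Your approach is genuinely different from the paper's, and it has a real gap in part (b).

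\textbf{The paper's argument.} For both parts, the paper builds an explicit one-parameter family of test functions out of thin rectangles and does a direct computation; no reduction to linear operators is needed. For (a), with $j\neq i\neq k$ one takes $f_j=\chi_R$, $f_k=\chi_{R'}$ (the reach), and $f_i=\chi_Q$ for a fixed cube $Q$ placed in the intersection of the strips $R+\BBR(v_j-v_i)$ and $R'+\BBR(v_k-v_i)$; then $|\widetilde\Lambda_\vectv|\gtrsim\varepsilon$ while the right side is $\varepsilon^{1/p_i'}$ with $1/p_i'>1$, and the lower bound survives deletion of any half of $Q$. For (b), after dualizing to $p_3=\infty$ one takes $f_1=f_2=\chi_R$ with $R$ a $1\times\varepsilon$ rectangle parallel to $w_1-w_2$; a direct calculation gives $\|\operatorname S_\vectw(\chi_R,\chi_R)\|_1\gtrsim\varepsilon|\log\varepsilon|$ versus $\|\chi_R\|_p\|\chi_R\|_{p'}=\varepsilon$, and this disposes of \emph{all} $1\le p\le\infty$ at once.

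\textbf{The gap in your part (b).} In the case $L^\infty\times L^p\to L^p$ with $1<p<\infty$, your insertion $f_1(x_1,x_2)=e^{2\pi i\lambda(x_2)x_1}$ gives
\[
\operatorname S_\vectw(f_1,f_2)(x_1,x_2)=e^{2\pi i\lambda(x_2)x_1}\,\operatorname{p.v.}\!\int e^{-2\pi i\lambda(x_2)t}\,f_2(x_1,x_2-t)\,\frac{\mathrm dt}{t},
\]
which is the linearized Carleson operator in the variable $x_2$ acting on $f_2(x_1,\cdot)$. The function being hit is the $L^p$ input $f_2$, not an $L^\infty$ function; by Carleson--Hunt this is bounded on $L^p$ uniformly in the linearization $\lambda$, so your reduction produces \emph{no} contradiction. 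Your claimed conclusion ``Carleson bounded $L^\infty\to L^p_{\mathrm{loc}}$'' does not follow from the setup: there is no way to route the modulation so that the Carleson operator lands on the $L^\infty$ slot while the other input remains in $L^p$. This is exactly the regime the paper's $\varepsilon|\log\varepsilon|$ computation was designed to handle, and it does so without any appeal to Carleson theory.

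\textbf{Part (a).} Your sketch is vague at the decisive step (you flag this yourself), and the slab test functions $\chi_{\BBR\times I_j}$ are not characteristic functions of finite-measure sets, so they do not fit the generalized restricted type framework without further truncation and bookkeeping. The paper's three-rectangle construction avoids all of this: the exceptional index $i$ is attached to a \emph{fixed} cube $Q$ of measure $\sim 1$, so half-measure excision costs nothing, and the comparison $\varepsilon$ versus $\varepsilon^{1/p_i'}$ is immediate.
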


\begin{proof}
As usual, we will prove the equivalent statements for the trilinear
form $\widetilde{\Lambda}_\vectv$ or the bilinear operator
$\operatorname S_{\vectw}$ as in Section \ref{notations}.  To prove
part (a), we proceed as in Figure \ref{halfplanefigure}. For $j \neq
i \neq k$, set $f_j$ to be the characteristic function of a
rectangle $R$ of width $\varepsilon$ and length $1$ oriented
parallel to $v_j - v_k$, and let $f_k$ be the characteristic
function of its reach $R'$ (as above, we normalize $\vectv$ so that
$|v_j - v_k|=1$).  Let $f_i$ be the characteristic function of a
cube $Q$ contained in the intersection of the strips $R + \BBR \cdot
(v_j - v_i)$ and $R' + \BBR \cdot (v_k - v_i)$; choose $Q$ to have
measure comparable to $1$.  Computation yields
\[|\widetilde{\Lambda}_\vectv(f_1,f_2,f_3)| \gtrsim_{\vectv} \varepsilon,\]
while
\[|R|^{1/p_j}\,|R'|^{1/p_k}\,|Q|^{1/p_i} \sim
\varepsilon^{\frac{1}{p_j} + \frac{1}{p_k}} =
\varepsilon^{1/p_i'}.\]  Both estimates continue to hold after the
excision of any half-measure set from the cube $Q$, and sending
$\varepsilon$ to zero violates the generalized restricted type
estimate since $\frac{1}{p_i'} >1$.

\begin{figure}
\setlength{\unitlength}{12mm}
\begin{picture}(8,5)(-4,-4)
\thicklines
\multiput(-3,-3)(2,2){2}{
    \put(0,0){\line(1,1){1}}
    \put(0,0){\line(1,-1){.25}}
    \put(.25,-.25){\line(1,1){1}}
    \put(1,1){\line(1,-1){.25}}
    }
\thinlines
\multiput(-4,-3.25)(.4,0){15}{
    \line(1,0){.2}}
\multiput(-4,-2)(.4,0){15}{
    \line(1,0){.2}}
\multiput(-1.125,0)(0,-.4){11}{
    \line(0,1){.2}}
\multiput(.18,0)(0,-.4){11}{
    \line(0,1){.2}}
\thicklines
\put(-.7,-2.4){\line(1,0){.6}}
\put(-.7,-2.4){\line(0,-1){.6}}
\put(-.1,-3){\line(0,1){.6}}
\put(-.1,-3){\line(-1,0){.6}}
\put(-.5,-2.75){$Q$}
\put(-2.5,-2.7){$R$}
\put(-.5,-.7){$R'$}
\put(-.85,-3.5){\vector(1,0){1}}
\put(-.8,-3.75){$v_j-v_i$}
\put(.6,-2.15){\vector(0,-1){1}}
\put(.725,-2.75){$v_k-v_i$}
\put(-2.25,-1.8){\vector(1,1){1}}
\put(-2.7,-1.1){$v_j-v_k$}
\end{picture}
\caption{}
\label{halfplanefigure}
\end{figure}

This counterexample can be slightly modified to prove part (b).
Since the nondegeneracy condition on $P_\vectv$ is preserved under
duality (\emph{i.e.}, permutation of the three coordinates in
$\BBR^2 \times \BBR^2 \times \BBR^2$), we may assume that $p_3 =
\infty$; thus it suffices to prove that $\operatorname S_{\vectw}$
is unbounded from $L^p \times L^{p'}$ to $L^1$, where $\vectw =
\Phi^*(\vectv) = (v_1 - v_3\, , \, v_2 - v_3)$.  As in the
counterexample above, we take $f_1 = \chi_R$, where $R$ is a
rectangle of length $1$ and width $\varepsilon$ oriented parallel to
$w_1 - w_2 = v_1 - v_2$; however, instead of obtaining $f_2$ from
the reach of $R$, we simply set $f_2 = f_1 = \chi_R$.  Computation
then yields \[\| \operatorname S_\vectw (f_1,f_2)\|_1
\gtrsim_{\vectw} -\varepsilon \log \varepsilon,\] while $\|f_1\|_p
\, \|f_2\|_{p'} = \varepsilon$, and again sending $\varepsilon$ to
zero yields unboundedness.
\end{proof}

One should note that the logarithmic divergence in the proof of part
(b) occurs because $\operatorname S(\chi_R, \chi_R)$ is large on a
region approaching the boundary of the rectangle $R$; thus if we
study the expression \[\widetilde \Lambda_\vectv (\chi_R, \chi_R,
\chi_E) = \int_{\BBR^2} \operatorname S_\vectw (\chi_R, \chi_R) \,
\chi_E\] for any measurable set $E$, we are not at liberty to delete
an \emph{arbitrary} half-measure subset of $E$ and still obtain this
divergence.  Therefore, this counterexample cannot violate
generalized restricted type $\vectp$ estimates for $\vectp$ on the
boundary of the Banach triangle; this state of affairs could be
viewed as analogous to the fact that the Hilbert transform is
unbounded on $L^1$ but is in fact of weak-type $(1,1)$ (though we
are of course making no claims of any such weak-type bounds in the
present bilinear setting).

The reader should also note that the nondegeneracy assumption is not
merely an artifact of the proof; indeed, if $P_\vectv$ is
degenerate, the operator $\operatorname S_{\Phi^*(\vectv)}$ inherits
its boundedness properties from those of a one-dimensional bilinear
Hilbert transform.

With this discussion completed, we can finally state the Main
Theorem in full generality:
\begin{maintheorem}[General version]\label{mainthmgen}
Let $\widetilde D$ be a domain in $\Gamma \subset \BBR^6$ as in the
Main Theorem above, with nontrivial curvature in a $j$-th coordinate
slice for some $j \in \{1,2,3\}$.  Assume further that $\widetilde
D$ is not strongly degenerate.  Then for admissible triples $\vectp
= (p_1,p_2,p_3)$, the trilinear form $\Lambda_{\widetilde D}$ fails
to be of generalized restricted type $\vectp$ whenever:
\begin{itemize}
\item $p_i <2$ for some $i \neq j$,
\item $p_i \leq -1$ for some $i$ (\emph{i.e.}, $\vectp$ lies outside
the Banach triangle).
\end{itemize}
If $D = \Phi^{-1}(\widetilde D)$ is the canonical preimage of
$\widetilde D$ in $\BBR^4$, then the operator $\T_D$ is unbounded
from $L^{p_1}(\BBR^2) \times L^{p_2}(\BBR^2) \rightarrow
L^{p_3'}(\BBR^2)$ whenever $p_i = \infty$ for some $i$ (\emph{i.e.},
whenever $\vectp$ lies on the border of the Banach triangle) and
additionally $p_j \neq 2$ for all $j$.  If $\widetilde D$ is further
assumed to be nondegenerate, then the restriction $p_j \neq 2$ can
be removed.
\end{maintheorem}

\begin{proof}
To treat the first case, one can observe that in the interior of the
Banach triangle generalized restricted type $\vectp$ estimates imply
imply ``restricted type $\vectp$'' estimates, and, since our
counterexamples were constructed from characteristic functions, our
proofs thus far can be applied (cf.\ Lemma 3.6 of \cite{thiele}).
Thus, we need only concern ourselves with $\vectp$ on the border of
or outside the Banach triangle.  If $\widetilde D$ is nondegenerate,
one simply combines Theorem \ref{halfplanetheorem} with the usual
dilation- and translation-invariance of bilinear multiplier norms to
obtain the desired unboundedness or failure of generalized
restricted type.

If $\widetilde D$ is degenerate, however, we need to exploit the
curvature of $\partial \widetilde D$.  Note that, since $\widetilde
D$ is assumed not to be strongly degenerate, degeneracy of
$\widetilde D$ and the coordinate-slice curvature hypothesis imply
that $\widetilde D$ must in fact satisfy the hypotheses of Theorem
\ref{grafakostheorem}.  For $\vectp$ outside the Banach triangle,
the proof of Theorem \ref{grafakostheorem} carries over after the
excision of half-measure sets wherever necessary, and one obtains
the failure of generalized restricted type $\vectp$ for
$\Lambda_{\widetilde D}$.  If $\vectp$ lies on the boundary of the
Banach triangle, by duality it suffices to disprove $L^p \times
L^{p'} \rightarrow L^1$ bounds on $\T_D$ for any $\widetilde D$
satisfying the hypotheses of Theorem \ref{grafakostheorem}; note
that these hypotheses are again symmetric under permutation of
coordinates.  If $1 \neq p \neq \infty$, this can be accomplished by
following the methods of \cite{diestelgrafakos} or
\cite{grafakosreguera} (\emph{i.e.}, the proof of Theorem
\ref{grafakostheorem} phrased in terms of bilinear operators).
Finally, if $p=1$ or $p=\infty$, we use dilations and translations
of $D$ to pass to a half-space multiplier $\T_P$; from this point,
we simply consider either $\T_P(1, \,\cdot\,)$ or $\T_P(\, \cdot \,
, 1)$ and invoke the unboundedness of the Hilbert transform on
$L^1$.
\end{proof}

In summary, the presently known range of unboundedness of $\T_D$ or
$\Lambda_{\widetilde D}$ for nondegenerate $\widetilde D \subset
\Gamma$ with boundary curvature in a first coordinate slice is given
by the shaded region of the type diagram in Figure
\ref{banachtriangle}; of course, the corresponding ranges for the
other two slice curvature conditions are given by rotations of this
diagram.  If $\widetilde D$ is merely assumed not to be strongly
degenerate, we are forced to omit the vertices of the local $L^2$
region; we reiterate that the status of the bilinear multiplier
problem in the unshaded regions is completely unknown at present.

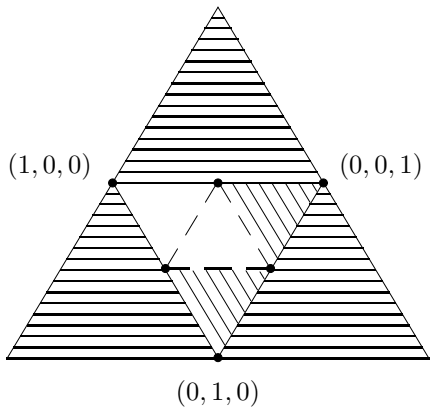
\begin{figure}
\setlength{\unitlength}{7mm}
\begin{picture}(10,9)(-5,-5)
\put(-4.01,-4.02){\line(1,0){8.02}}
\put(-4,-4){\line(3,5){4}}
\put(4,-4){\line(-3,5){4}}
\put(0,-4){\line(-3,5){2}}
\put(0,-4){\line(3,5){2}}
\put(-2,-0.68){\line(1,0){4}}
\multiput(-0.02,-0.68)(-.375,-.625){3}
    {\line(-3,-5){.2222}}
\multiput(0.02,-0.68)(.375,-.625){3}
    {\line(3,-5){.2222}}
\put(-1,-2.31){\line(1,0){.45}}
\put(-.25,-2.31){\line(1,0){.5}}
\put(1,-2.31){\line(-1,0){.45}}
\put(-1.875,-.47){\line(1,0){3.75}}
\put(-1.75,-.26){\line(1,0){3.5}}
\put(-1.625,-.05){\line(1,0){3.25}}
\put(-1.5,.16){\line(1,0){3}}
\put(-1.375,.37){\line(1,0){2.75}}
\put(-1.25,.58){\line(1,0){2.5}}
\put(-1.125,.79){\line(1,0){2.25}}
\put(-1,1){\line(1,0){2}}
\put(-.875,1.21){\line(1,0){1.75}}
\put(-.75,1.42){\line(1,0){1.5}}
\put(-.625,1.63){\line(1,0){1.25}}
\put(-.5,1.84){\line(1,0){1}}
\put(-.375,2.05){\line(1,0){.75}}
\put(-.25,2.26){\line(1,0){.5}}
\put(-.125,2.45){\line(1,0){.25}}
\put(-2,-3.34){
\put(-1.875,-.47){\line(1,0){3.75}}
\put(-1.75,-.26){\line(1,0){3.5}}
\put(-1.625,-.05){\line(1,0){3.25}}
\put(-1.5,.16){\line(1,0){3}}
\put(-1.375,.37){\line(1,0){2.75}}
\put(-1.25,.58){\line(1,0){2.5}}
\put(-1.125,.79){\line(1,0){2.25}}
\put(-1,1){\line(1,0){2}}
\put(-.875,1.21){\line(1,0){1.75}}
\put(-.75,1.42){\line(1,0){1.5}}
\put(-.625,1.63){\line(1,0){1.25}}
\put(-.5,1.84){\line(1,0){1}}
\put(-.375,2.05){\line(1,0){.75}}
\put(-.25,2.26){\line(1,0){.5}}
\put(-.125,2.45){\line(1,0){.25}}
}
\put(2,-3.34){
\put(-1.875,-.47){\line(1,0){3.75}}
\put(-1.75,-.26){\line(1,0){3.5}}
\put(-1.625,-.05){\line(1,0){3.25}}
\put(-1.5,.16){\line(1,0){3}}
\put(-1.375,.37){\line(1,0){2.75}}
\put(-1.25,.58){\line(1,0){2.5}}
\put(-1.125,.79){\line(1,0){2.25}}
\put(-1,1){\line(1,0){2}}
\put(-.875,1.21){\line(1,0){1.75}}
\put(-.75,1.42){\line(1,0){1.5}}
\put(-.625,1.63){\line(1,0){1.25}}
\put(-.5,1.84){\line(1,0){1}}
\put(-.375,2.05){\line(1,0){.75}}
\put(-.25,2.26){\line(1,0){.5}}
\put(-.125,2.45){\line(1,0){.25}}
}
\put(0,-0.68){\circle*{.15}} \put(-2,-0.68){\circle*{.15}}
\put(2,-0.68){\circle*{.15}} \put(0,-4){\circle*{.15}}
\put(1,-2.31){\circle*{.15}} \put(-1,-2.31){\circle*{.15}}
\put(1.125,-2.1){\line(-3,5){.85}}
\put(1.25,-1.89){\line(-3,5){.725}}
\put(1.375,-1.68){\line(-3,5){.6}}
\put(1.5,-1.47){\line(-3,5){.475}}
\put(1.625,-1.26){\line(-3,5){.35}}
\put(1.75,-1.05){\line(-3,5){.22}}
\put(1.875,-.84){\line(-3,5){.08}}
\put(-1,-1.66){
\put(1.125,-2.1){\line(-3,5){.85}}
\put(1.25,-1.89){\line(-3,5){.725}}
\put(1.375,-1.68){\line(-3,5){.6}}
\put(1.5,-1.47){\line(-3,5){.475}}
\put(1.625,-1.26){\line(-3,5){.35}}
\put(1.75,-1.05){\line(-3,5){.22}}
\put(1.875,-.84){\line(-3,5){.08}} }
\put(-4,-.5){$(1,0,0)$}
\put(2.3,-.5){$(0,0,1)$}
\put(-.8,-4.8){$(0,1,0)$}
\end{picture}\caption{Type diagram of points $\big(\frac{1}{p_1},
\frac{1}{p_2},\frac{1}{p_3}\big)$} \label{banachtriangle}
\end{figure}

\section{Open directions and remarks}\label{remarks}
\subsection{Open directions}
\subsubsection*{More exotic domains}
Even in view of Proposition \ref{muscaluprop}, the curvature
conditions of the Main Theorem may seem somewhat \textit{ad hoc}.
There exist less degenerate domains $\widetilde D \subset \Gamma$
whose boundaries have nontrivial principal curvature at a point but
are locally flat in the three coordinate directions of $\BBR^2
\times \BBR^2 \times \BBR^2$; for an example, consider the domain
$\widetilde D_1 = \Phi(D_1)$, with
\[D_1 = \{ (\xi_1,\xi_2,\xi_3,\xi_4) \in \BBR^4 \; | \;
\xi_4 > \xi_1\xi_3 + \xi_1^2\}.\] This domain falls outside the
scope of both the techniques of this paper and those of
\cite{diestelgrafakos} and \cite{grafakosreguera}.

\subsubsection*{Comparison of the methods}
It should be noted here that while Corollary \ref{convexcor}
generalizes Theorem 1 of \cite{grafakosreguera}, the \emph{methods}
of this paper do not appear to be strictly stronger than those of
Grafakos \textit{et al.}  In short, their methods require the
availability of rather specific normal vectors, but there is no
restriction on the boundary points at which these normal vectors
occur as in our slice conditions; see Theorem \ref{grafakostheorem}.
It is easy to construct examples of domains satisfying the
hypotheses of our Main Theorem but failing those of Theorem
\ref{grafakostheorem} (cf.\ Corollary \ref{convexcor}); examples
treatable by Theorem \ref{grafakostheorem} but not the Main Theorem
seem less trivial to produce.  For instance, elementary algebraic
arguments show that one cannot find such an example $\widetilde D =
\Phi(D)$ with $\partial D$ a quadratic subvariety of $\BBR^4$;
however, the argument seems particular to the quadratic setting, and
it could perhaps be interesting to find such examples in general.

\subsubsection*{Untreated ranges of exponents}
Finally, of course there remains the question of the exact range of
$L^p$ spaces for which one should expect unboundedness results.  An
obvious problem is to consider a domain $\widetilde D$ satisfying
exactly one of the coordinate-slice curvature conditions and address
the omitted triangle lying outside local $L^2$ but within the Banach
triangle (see Figure \ref{banachtriangle}). This region seems beyond
the reach of the rather standard methods used in this paper; in
short, one needs to exploit the small area of Besicovitch sets by
measuring the appropriate square function in $L^{p_j}$ with $p_j
<2$, but if one only has curvature in a $j$-th coordinate slice
there is no guarantee that the constituent rectangles will interact
productively with their reaches under the application of
$\Lambda_{\widetilde D}$.

No nontrivial result is currently known regarding the high-dimension
($d\geq 2$) bilinear multiplier problem for domains in the local
$L^2$ case, and once again it seems that significantly different
techniques should be used to treat this range of $L^p$ spaces.
\subsection{Remarks}
\begin{enumerate}
\item Of course, as in Theorem \ref{grafakostheorem}, the local
smoothness and curvature assumptions of the Main Theorem are not
necessary \textit{per se}.  One need only guarantee that a
collection of normal vectors to $\partial \widetilde D$
occurring in a coordinate slice yields Besicovitch sets as in
Property \ref{besicovitchproperty}; for a characterization of
such collections, see the paper \cite{bateman} of Bateman.
\item Using (the proof of) the multilinear version of de Leeuw's Theorem proved in
\cite{diestelgrafakos}, one can readily derive analogues of the
Main Theorem for multipliers given by domains $\widetilde D$ in
higher-dimensional spaces, with $\Gamma = \Gamma_2$ replaced by
\[\Gamma_d := \{(\xi_1,\xi_2,\xi_3) \in \BBR^d \times \BBR^d
\times \BBR^d \; | \; \xi_1 + \xi_2 + \xi_3 = 0\}.\]  However,
the curvature conditions arising in this setting are markedly
clumsier.  As in the Main Theorem, the intersection of some
neighborhood in $\partial \widetilde D$ with some $2$-plane of a
prescribed form must be a plane curve of nonzero curvature; the
permissible such $2$-planes are dictated by our conditions and
de Leeuw's Theorem.  Of course, due to the abundance of
nontrivial normal curvature guaranteed by strict convexity,
Corollary \ref{convexcor} holds as stated with $\BBR^2$ replaced
by $\BBR^d$ and $\BBR^4$ replaced by $\BBR^{2d}$.
\item In the same vein, one can generalize the
Main Theorem to a statement about $k$-linear operators (or
$(k+1)$-linear forms); again, the arising curvature conditions
are obtained by slicing a domain in $\BBR^{kd}$ (or its
appropriate embedding into $\BBR^{(k+1)d}$) by prescribed
$2$-planes.  As in the previous remark, Corollary
\ref{convexcor} continues to hold in the general $k$-linear
setting.
\end{enumerate}

\bibliographystyle{alpha}
\bibliography{ballbib}
\end{document}